\documentclass[10pt,psamsfonts]{amsart}
\usepackage[utf8]{inputenc}

\usepackage{amsmath}
\usepackage{amsthm}
\usepackage{amssymb}
\usepackage{amscd}
\usepackage{amsfonts}
\usepackage{amsbsy}
\usepackage{graphicx}
\usepackage[dvips]{psfrag}
\usepackage{tikz, pgfplots}
\usepackage{array}
\usepackage{color}
\usepackage{xcolor}
\usepackage{epsfig}
\usepackage{url}
\usepackage{hyperref}
\usepackage{float}
\usepackage{stmaryrd}	
\usepackage{overpic}
\usepackage{epstopdf}
\usepackage{epsfig,afterpage}
\usepackage[all]{xy}
\usepackage[makeroom]{cancel}
\usepackage{soul}


\makeatletter
\DeclareFontFamily{U}{tipa}{}
\DeclareFontShape{U}{tipa}{m}{n}{<->tipa10}{}
\newcommand{\ark@char}{{\usefont{U}{tipa}{m}{n}\symbol{62}}}%

\newcommand{\ark}[1]{\mathpalette\ark@arc{$#1$}}

\newcommand{\ark@arc}[2]{%
	\sbox0{$\m@th#1#2$}%
	\vbox{
		\hbox{\resizebox{\wd0}{\height}{\ark@char}}
		\nointerlineskip
		\box0
	}%
}
\makeatother

\newtheorem {theorem} {Theorem} 

\newtheorem {theorema} {Theorem A}

\newtheorem {proposition} {Proposition}

\newtheorem {lemma} {Lemma}
\newtheorem {definition} {Definition}
\newtheorem {remark} {Remark}
\newtheorem {example} {Example}

\newtheorem {corollary} {Corollary}


\textwidth=14truecm


\begin{document}
\renewcommand{\arraystretch}{1.5}

\title[Dynamics of PSVFs using discrete dynamics]
{some aspects of thermodynamic formalism of piecewise smooth vector fields} 
\author[M.A.C. Florentino,  T. Carvalho and Jeferson Cassiano]
{Marco A. C. Florentino$^1$, Tiago Carvalho$^2$ and Jeferson Cassiano$^3$}

\address{$^1$ DM-UFSCar, Zip Code  13565-905, S\~ao Carlos, S\~ao Paulo, Brazil} \email{marcoflorentino@estudante.ufscar.br}

\address{$^2$ Departamento de Computa\c{c}\~{a}o e Matem\'{a}tica, Faculdade de Filosofia, Ci\^{e}ncias e Letras de Ribeir\~{a}o Preto,
	USP, Av. Bandeirantes, 3900, CEP 14040-901, Ribeir\~{a}o Preto, SP, Brazil.}\email{tiagocarvalho@usp.br}

\address{$^3$ Centro de Matem\'{a}tica Computa\c{c}\~{a}o e Cogni\c{c}\~{a}o. Universidade Federal do ABC, 09210-170. Santo Andr\'{e}. S.P. Brazil} 
\email{jeferson.cassiano@ufabc.edu.br}

\subjclass[2020]{37C05, 37C15, 37C40, 34F05, 34C28}

\keywords{Filippov systems, Diffeomorphisms, Hausdorff Dimension, Piecewise Smooth Vector Fields}

\maketitle

\begin{abstract}
	In this paper we study some aspects of thermodynamic formalism, more specifically topological pressure and, as a consequence, topological entropy for piecewise smooth vector fields, using topological conjugation with shift maps and the Perron-Frobenius Operator. Some relationships between entropy and Hausdorff dimensions are also investigated.
\end{abstract}


\section{Introduction}\label{intro}
In $1965$, R. Adler, A. Konheim and M. McAndrew
proposed in \cite{adler} a notion of topological entropy, inspired by the Kolmogorov-Sinai entropy, but whose definition
does not involve any invariant measure. This notion applies to any continuous transformation in a compact topological space. Later, E. Dinaburg (see \cite{din}) and R. Bowen (see \cite{bow1,bow2}) gave a different but equivalent definition for continuous transformations in compact metric spaces. Despite being a bit more restricted, it has the advantage of making the meaning of this concept more transparent: \textit{topological entropy is the exponential growth rate of the number of orbits that are distinguishable within a certain, arbitrarily small degree of precision}.
In addition, Bowen extended the definition to non-compact spaces, which is also very useful in applications.

Topological pressure is a weighted version of topological entropy, where the "weights" are determined by a continuous function called the potential. The idea of pressure was brought from Statistical Mechanics to Ergodic Theory by mathematician and theoretical physicist David Ruelle, one of the originators of the differentiable ergodic theory, and was later extended by the British mathematician Peter Walters.

The study of Piecewise Smooth Vector Fields (PSVFs for short) has been established in recent years not only because of the beauty of the theoretical results but also due to the proximity of this area to applied sciences such as mechanics, engineering, electronics and
biology, as well as social sciences and economics (for applications of PSVFs, see \cite{bernardo} and references therein). It is quite clear that
the Existence and Uniqueness Theorem is not true in the context of PSVFs. On the other hand, under adequate hypotheses, we already know that Poincare Index Theorem, Poincaré-Bendixson Theorem and Peixoto's Theorem have versions for PSVFs (see \cite{bcd,bce,soto}).

In \cite{andre2}, the authors proposed a new way to approach PSVFs through the construction of a metric space of all possible trajectories. Using this, they defined the topological entropy of a planar PSVFs, proved the existence of planar PSVFs with positive entropy (finite and infinite) and provided suficient conditions for a planar PSVF to have infinite entropy. Furthermore, based on this metric space, in \cite{andre1} it is proposed a way to combine the dynamics of a planar PSVF with the shift map in sequence spaces. This approach is absolutely new in the literature and estates tools of discrete dynamics that can be used in order to prove results concerning PSVFs.

In \cite{andre2}, as we said above, the authors introduced the concept of entropy in PSVF and found examples of PSVFs in which entropy is always $\log b$, for a positive integer $b .$ In the present work we obtain examples of PSVFs whose entropy is $\log b$, for a positive \textbf{real} number $b$. In addition, we present the concept of topological pressure for PSVFs. As far as we know, the study of thermodynamic formalism in PSVF is new in the literature and, as we have already said, pressure is a generalization of the concept of entropy. In this work we also use the generalized tent application to calculate the entropy of PSVFs and, as a consequence of this, we obtain the Hausdorff dimension for these vector fields.

\section{Preliminaries}\label{secao teoria basica CVSPs}

Let $V$ be an open of $\mathbb{R}^n$. Consider a manifold $\Sigma \subset V$ of codimension $1 $ in $\mathbb{R}^n$ given by $\Sigma = f^{-1} (0)= \left\lbrace q \in V : f(q) = 0 \right\rbrace $, where $f : V \rightarrow \mathbb{R} $ is smooth having $0 \in \mathbb{R}$ as a regular value (that is, $\nabla f(p) \neq 0$, for $p \in f^{-1} (0)$). We call $\Sigma$ a switching manifold whose boundary separates the regions $\Sigma^{+} = \left\lbrace q \in V : f(q) \geq 0 \right\rbrace $ and $\Sigma^{- } = \left\lbrace q \in V : f(q) \leq 0 \right\rbrace $.

Call $\mathfrak{X}^r$ the space of the $C^r$-vector fields in $V \subset \mathbb{R}^n$ endowed with the
$C^r$-topology, with $r \geq 1$ large enough for our purposes. Call $\mathcal{Z}^r$ the space of PSVFs $Z : V \rightarrow \mathbb{R}^n $ such that

\begin{equation}
	\label{sis}
	Z (q) = \left\{ \begin{array}{c}
		X_{+} (q) \, \, if \,\, q \, \in \Sigma^{+} \\
		X_{-} (q) \, \, if \,\, q \, \in \Sigma^{-}
	\end{array} \right. 
\end{equation}
where $X_{+} = \left( X_{1+}, X_{2+}, \cdots , X_{n+} \right) $, $X_{-} = \left( X_{1-}, X_{2-}, \cdots , X_{n-} \right) \in \mathfrak{X}^ r .$ 

We denote (\ref{sis}) simply by $Z = (X_{+}, X_{-})$ when there is no confusion about the switching manifold. Note that $Z$ is multi-valued in $\Sigma$. We equip $\mathcal{Z}^r$ with the product topology, i.e., $$\parallel Z \parallel_{C^r} = max \left\lbrace \mid X_{+} \mid_{C^r}, \mid X_{-} \mid_{C^r} \right\rbrace , $$ where $\mid \cdot \mid_{C^r}$ denotes the classical $C^r$-norm of the smooth vector fields $X_{+}$ and $X_{-}$ restricted to $\Sigma^{+}$ and $\Sigma^{-}$, respectively.

In order to define rigorously the flow of  $Z$ passing through a point $p \in V$, we distinguish whether this point is at $\Sigma^{\pm} \setminus \Sigma$ or $\Sigma$. For the first two regions, the local trajectory is defined by $X_{+}$ and $X_{-}$ respectively, as usual, but for $\Sigma$ we rely on the contact between the vector fields $X_{+},$ $X_{-}$ and $\Sigma$ characterized by the Lie derivative $X_{+} f(q) = \left\langle \nabla f(q), X_{+}(q) \right\rangle $, where $\left\langle \cdot, \cdot \right\rangle $ is the usual inner product. We also use higher order derivatives given by $X_{+}^k f = X_{+} (X_{+}^{k-1} f) = \left\langle \nabla X_{+}^{k-1} f, X_{+} \right\rangle$.  

A \textbf{Crossing Region } is defined by $\Sigma^{c} = \left\lbrace p \in \Sigma \mid X_{+}f(q) X_{-}f(q) > 0 \right\rbrace $. In addition we denote  $\Sigma^{c^+} = \left\lbrace p \in \Sigma \mid X_{+}f(q) > 0,  X_{-}f(q) > 0 \right\rbrace $ and $\Sigma^{c^-} = \left\lbrace p \in \Sigma \mid X_{+}f(q) < 0,  X_{-}f(q) < 0 \right\rbrace $.
This region $\Sigma^c$ is relatively open in $\Sigma$ and their definition exclude the points where \linebreak $X_{+}f(q) X_{-}f(q) = 0$. These points are on the boundary of this region.

\begin{remark}
In this paper we will not deal with points where $X_{+}f(q) X_{-}f(q) < 0$, where a sliding motion can be considered.
\end{remark}

Any $q \in \Sigma$ such that $X_{+}f(q)X_{-}f(q) = 0$ is called a boundary singularity. The boundary singularities can be of two types: $(i)$ an equilibrium of $X_{+}$ or $X_{-}$ over $\Sigma$ or $(ii)$ a point where a trajectory of $X_{+}$ or $X_{-}$ is tangent to $\Sigma$ (and it is not an equilibrium of  $X_{+}$ or $X_{-}$). In the second case, we call $q \in \Sigma$ a tangential singularity (or tangency
point) and we denote the set of these points by $\Sigma^t$. 

If there exists an orbit of the vector field $X_{+}\mid_{\Sigma^{+}}$ (respectively $X_{-}\mid_{\Sigma^{-}}$) reaching $q \in \Sigma^t$ in a finite time, then such tangency is called a visible tangency for $X_{+}$ (respectively $X_{-}$), otherwise we call $q$ an invisible tangency for $X_{+}$ (respectively $X_{-}$). 

We may also distinguish a particular tangential singularity called two-fold, which is a 
tangency $q$ of $X_{+}$ and $X_{-}$ simultaneously (that is, $X_{+}f(q) = X_{-}f(q) = 0$) satisfying $X_{+}^ 2f(q) X_{-}^2f(q) \neq 0$. A two-fold is called
\begin{enumerate}
\item[$1.$] visible-visible, if it is a visible tangency for both $X_{+}$ and $X_{-}$;
\item[$2.$] invisible-invisible, if it is an invisible tangency for $X_{+}$ and $X_{-}$;
\item[$3.$] visible-invisible, whether it is a visible tangency for $X_{+}$ and an invisible tangency for $X_{-}$ or vice versa.
\end{enumerate} 



%


\textbf{In the sequel of the paper, we will consider just planar PSVFs.} In this case, we say that a tangency point $p \in V$ is singular if $p$ is an invisible-invisible tangency for both $X_{+}$ and $X_{-}$. On the other hand, a tangency point $p \in V$ is regular if it is not singular.

\begin{definition}
	\label{def2}
	The local trajectory (orbit) $\phi_Z (t, p)$ of a PSVF $Z = (X_{+}, X_{-})$ through a small neighborhood of $p \in U$ is defined as follows: \begin{enumerate}
		\item[$(i)$] For $p \in \Sigma^{+}$ and $p \in \Sigma^{-}$ the trajectory is given by $\phi_Z (t, p) = \phi_{X_{+}} ( t,p)$ and $\phi_Z (t,p) = \phi_{X_{-} }(t,p)$ respectively. 
		\item[$(ii)$] For $p \in \Sigma^{c^{+}}$ and taking the origin of time at $p$ the trajectory is defined as $\phi_Z (t, p) = \phi_{X_{-}} (t,p)$ for $t \leq 0$ and $\phi_Z (t,p) = \phi_{X_{+} }(t,p)$ for $t \geq 0.$ If $p \in \Sigma^{c^{-}}$ the definition is the same inverting time;
		\item[$(iii)$] For $p$ a point of regular tangency and taking the origin of time at $p$ the trajectory is defined as $\phi_Z (t, p) = \phi_1 (t,p)$ for $t  \leq 0$ and $\phi_Z (t,p) = \phi_2 (t,p)$ for $t  \geq 0,$ where each $\phi_1, \phi_2$ is either $\phi_{X_{+}}$ or $\phi_{X_{-}}$; 
		\item[$(iv)$] For $p \in V \subset \mathbb{R}^2$ a singular tangency point, $\phi_Z (t, p) = p$ for all $t \in \mathbb {R}.$ 
	\end{enumerate}
\end{definition}


\begin{definition}
\label{def1} A global trajectory is a concatenation of local trajectories. Moreover, a maximal trajectory $\Gamma_z (t, p_0)$ is a global trajectory that can not be extended to any other global trajectories by joining local ones, that is, if $\widetilde{\Gamma_z} $ is a global trajectory containing $\Gamma_z $ then $\Gamma_z = \widetilde{\Gamma_z}$. In this case, we call $I = (\tau^{-}(p_0), \tau^{+} (p_0))$ the maximal interval of the solution $\Gamma_z$.
\end{definition}

\begin{remark}We should note that the maximal interval of the solution may not cover
	the interval $(- \infty , \infty)$, that is, $\tau^{\pm} (p_0)$ could be finite values.\end{remark}

There is a class of piecewise smooth systems satisfying $X_{+}f(p) =X_{-}f(p)$, this class constitutes a well-known class of Filippov systems called \textbf{refractive systems} (see \cite{bmt}). In this paper, we work with refractive systems which, in addition, satisfy $div(X_{\pm}) = 0 $ in $\Sigma^{\pm},$ in other words, preserves the volume measure in $\Sigma^{\pm}$. Therefore, by the Corollary A of \cite{nova}, the PSVFs preserve the volume measure, that is, the Lebesgue measure, here denoted by $med$.


We will now present some definitions about the Perron-Frobeniu theory

\begin{definition}
	Let $(X, d)$ be a compact metric space and $f : X \to X$ be a continuous map. Let $\psi : X \to \mathbb{C}$ be a function. For the
	given $f$ and $\psi$, we can define an operator $\mathcal{L} = \mathcal{L}_{f, \psi}$ as
		\begin{equation*}
		\label{RPF}
		\mathcal{L} \phi(x) = \sum_{x \in f^{-1}(y)} \psi(x) \phi(x)
	\end{equation*}for $\phi$ in a suitable function space on $X$. The operator we just defined is called a \textbf{transfer operator}. If $\psi$ is positive, i.e., $\psi(x) > 0$ for all $x$ in $X$, then the operator is a positive operator, this means that it maps a positive function to a positive function. A positive transfer operator is also called a \textbf{Ruelle-Perron-Frobenius} (\textbf{RPF}) operator.
\end{definition}



A particular case of the transfer operator is given when we take $\psi = e^{\varphi} $. In this work we will use a particular case of Ruelle-Perron-Frobenius operator, taking $\varphi = - \beta \log \mid \det J_{\mu} f \mid $, where $J_{\mu} f $ is the Jacobian of $f$ with respect to the reference measure $\mu$.  So, $\varphi = - \beta \log \mid \det J_{\mu} f \mid $, is called \textbf{geometric potential}.



Based on the analysis of the Frobenius Operator, we present the following

\begin{definition}
	Let $\mathcal{L}_{f, \psi} : \mathcal{C}^{0}(X, \mathbb{C}) \to \mathcal{C}^{0}(X, \mathbb{C}) $ be the RPF operator, where $\phi \mapsto \mathcal{L}_{f, \psi} (\phi) = \underset{t \in f^{-1}(\cdot)}{\sum} \left( \phi {\psi}\right) (t) =  \underset{k}{\sum} \left( \phi {\psi}\right) \circ h_k $, $h_k$ inverses branches of $\phi$ and $\psi = e^{\varphi}$. So, the dual operator, $\mathcal{L}_{f, \psi}^{\ast} : \mathcal{M}(X) \to \mathcal{M}(X)$ ($\mathcal{M}(X)$ the measures set in X) is such that 
	\begin{equation*}
		\int_{X} g d \left( \mathcal{L}_{f, \psi}^{\ast} (\mu)\right) =  \int_{X}   \mathcal{L}_{f, \psi} (g) \, d(\mu) .
	\end{equation*}
\end{definition}

The concept of the generalized Perron-Frobenius operator is the analogue of the transfer matrix method of classical statistical mechanics where the free energy of a spin system with finite-range interaction can be obtained from the largest eigenvalue of a matrix, the so called "transfer matrix". In this work we will adopt the Lebesgue measure as the reference measure associated with the RPF.

%

\begin{definition}
	A nonnegative square matrix $A$ is irreducible if for each ordered pair of indices $i, j$, there exists some $n \geq 0$ such that $a_{ij}^n > 0$. We adopt the convention that for any matrix $A^0 = Id$, and so the $1\times 1$ matrix $[0]$ is irreducible. 
\end{definition}

\begin{theorema}
	\label{tpf}
	\textbf{(Perron-Frobenius Theorem)}	
	Let $A$ be a nonnegative square matrix $A$ of order $k$.
	\begin{enumerate}
		\item[$(i)$] There is a nonnegative eigenvalue $\lambda$ such that no eigenvalue of $A$ has absolute value greater than $\lambda$; 
		\item[$(ii)$] We have $\min_i \left( \sum_{j = 1}^{k} a_{ij}\right) \leq \lambda \leq \max_i \left( \sum_{j = 1}^{k} a_{ij}\right) ; $
		\item[$(iii)$] Corresponding to the envalue $\lambda$ ther is a nonnegative left (row) eigenvalue $u = (u_1, \cdots, u_k)$ and a nonnegative right (column) eigenvector 
		
		\begin{center}
			$v =  \begin{pmatrix}
				v_1 \\
				\vdots \\
				v_k 
				
			\end{pmatrix} ;$
		\end{center}
		
		\item[$(iv)$]	If $A$ is a irreducible then $\lambda$ is a simple eigenvalue and the corresponding eingenvector are strictly positive (i.e., $u_i > 0, v_i > 0$ all $i$);
		\item[$(v)$]	If $A$ is a irreducible then $\lambda$ is the  eingenvalue of $A$ a nonnegative eingenvector. 
	\end{enumerate}
	
\end{theorema}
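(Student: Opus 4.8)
The plan is to reduce everything to the case of a strictly positive matrix, where the Collatz--Wielandt variational characterization yields all the conclusions at once, and then to recover the general nonnegative case by approximation and the irreducible case through the positivity of $(I+A)^{k-1}$. So I would first treat a strictly positive $A$ (all $a_{ij}>0$). On the simplex $\Delta=\{x\in\mathbb{R}^{k}:x_i\ge0,\ \sum_i x_i=1\}$ set $r(x)=\min_{i:\,x_i>0}\frac{(Ax)_i}{x_i}$ and $\lambda=\sup_{x\in\Delta}r(x)$. The function $r$ is upper semicontinuous on the compact set $\Delta$, so the supremum is attained at some $v$; and if one had $Av\gneq\lambda v$, then, $A$ being strictly positive, $A(Av)>\lambda(Av)$ in \emph{every} coordinate, so $r(Av)>\lambda$, a contradiction. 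Hence $Av=\lambda v$ with $v=\lambda^{-1}Av>0$ and $\lambda>0$. If $Aw=\mu w$ with $w\in\mathbb{C}^{k}\setminus\{0\}$, then $|\mu|\,|w_i|=|(Aw)_i|\le(A|w|)_i$, so $A|w|\ge|\mu|\,|w|$ and $|\mu|\le r(|w|)\le\lambda$; if in addition $|\mu|=\lambda$, maximality forces $A|w|=\lambda|w|$, hence $|w|>0$, and equality in the triangle inequality forces all the $w_j$ to share one argument, so $w$ is a scalar multiple of $|w|$ and $\mu=\lambda$. For simplicity: if $Aw=\lambda w$ with $w$ real and not proportional to $v$ (complex eigenvectors being handled through their real and imaginary parts), choose $t\in\mathbb{R}$ with $v-tw\ge0$ having a vanishing coordinate; then $v-tw$ is a nonnegative, non--strictly--positive $\lambda$-eigenvector, forcing $v-tw=0$, so the $\lambda$-eigenspace is one dimensional. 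To rule out generalized eigenvectors, let $u>0$ be the positive left eigenvector obtained by applying the above to $A^{T}$; if $(A-\lambda I)z=v$ then $0=(u^{T}A-\lambda u^{T})z=u^{T}v>0$, impossible. This establishes (i) and (iv)--(v) for $A>0$, together with the positivity of $u$ and $v$.

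Next I would prove (i)--(iii) for a general nonnegative $A$ by perturbation. Put $A_{\e}=A+\e\,\mathbf{1}\mathbf{1}^{T}>0$ with $\mathbf{1}=(1,\dots,1)^{T}$, $\e>0$. By the positive case, $\rho(A_{\e})$ is a positive eigenvalue of $A_{\e}$ with positive eigenvector $v_{\e}\in\Delta$ and dominates the moduli of all eigenvalues of $A_{\e}$. Since $\e\mapsto A_{\e}$ is entrywise nondecreasing, the spectral radius is monotone on nonnegative matrices (from $0\le A^{n}\le A_{\e}^{n}$ entrywise and Gelfand's formula) and continuous in the entries, so $\rho(A_{\e})\downarrow\rho(A)=:\lambda\ge0$ as $\e\downarrow0$; passing to a subsequence $v_{\e_n}\to v\in\Delta$ gives $Av=\lim A_{\e_n}v_{\e_n}=\lim\rho(A_{\e_n})v_{\e_n}=\lambda v$ with $v\ge0$, $v\ne0$. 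As $\lambda=\rho(A)$ is by definition the maximum of $|\mu|$ over the eigenvalues $\mu$ of $A$, this yields (i) and a nonnegative right eigenvector; applying the same to $A^{T}$ yields a nonnegative left eigenvector $u$, which completes (iii). For (ii), every eigenvalue is bounded by $\|A\|_{\infty}=\max_i\sum_j a_{ij}$, so $\lambda\le\max_i\sum_j a_{ij}$; and with $u\ge0$, $u\ne0$, $\ \lambda\,(u^{T}\mathbf{1})=u^{T}A\mathbf{1}=\sum_i u_i\bigl(\sum_j a_{ij}\bigr)\ge\bigl(\min_i\sum_j a_{ij}\bigr)(u^{T}\mathbf{1})$ with $u^{T}\mathbf{1}>0$, so $\lambda\ge\min_i\sum_j a_{ij}$.

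Finally, for irreducible $A$ (parts (iv)--(v)) I would pass to $B=(I+A)^{k-1}$. Irreducibility means that for each ordered pair $(i,j)$ there is a path $i\to\cdots\to j$ of length $\le k-1$ in the digraph of $A$, which, with the $I$ term, gives $B>0$; moreover $Ax=\lambda x$ iff $Bx=(1+\lambda)^{k-1}x$, and the eigenvalues of $B$ are exactly the $(1+\mu)^{k-1}$ for $\mu$ an eigenvalue of $A$. Taking $v\ge0$ the eigenvector from the previous step, $v=(1+\lambda)^{-(k-1)}Bv>0$ since $(1+\lambda)^{k-1}>0$; similarly the left eigenvector $u$ is strictly positive, and when $k\ge2$ irreducibility forbids a zero row, so $Av>0$ and hence $\lambda>0$. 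Geometric simplicity follows exactly as in the positive case (subtract a multiple of $v$ to create a zero coordinate), and algebraic simplicity follows from $0=(u^{T}A-\lambda u^{T})z=u^{T}v>0$ being impossible, so $\lambda$ is a simple eigenvalue. For (v): if $Aw=\mu w$ with $w\ge0$, $w\ne0$, then $\lambda(u^{T}w)=u^{T}Aw=\mu(u^{T}w)$ with $u^{T}w>0$, hence $\mu=\lambda$, i.e. $\lambda$ is the only eigenvalue of $A$ admitting a nonnegative eigenvector. The delicate points are all in the positive case — the upper semicontinuity of $r$ and the positivity argument that make the Collatz--Wielandt supremum an honest eigenvector, and the upgrade from geometric to algebraic simplicity via the strictly positive left eigenvector; once these are in hand, the approximation step and the $(I+A)^{k-1}$ reduction are routine, modulo the standard monotonicity and continuity of the spectral radius on nonnegative matrices.
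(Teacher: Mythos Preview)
The paper does not actually prove this theorem: its entire proof is the single line ``See \cite{walters}''. Your argument, by contrast, is a correct and self-contained proof following the standard Collatz--Wielandt route: handle the strictly positive case via the variational quantity $r(x)=\min_{i}(Ax)_i/x_i$ on the simplex, pass to general nonnegative $A$ by the perturbation $A_\e=A+\e\,\mathbf{1}\mathbf{1}^{T}$ together with continuity and monotonicity of the spectral radius, and recover the irreducible case through Wielandt's device $B=(I+A)^{k-1}>0$. The delicate points you flag --- upper semicontinuity of $r$, the upgrade from geometric to algebraic simplicity via the strictly positive left eigenvector, the equality case of the triangle inequality forcing eigenvectors on the spectral circle to be rotations of a positive vector --- are all handled correctly. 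This is essentially the proof one finds in Walters' book, so your approach is entirely consistent with what the paper invokes by citation; you have simply supplied the argument that the paper chose to outsource.
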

\begin{proof}
	See \cite{walters}.
\end{proof}

The item $(i)$ of the Theorem (A\ref{tpf}) guarantees the existence of an eigenvalue that is the maximum of the absolute values of the eigenvalues of $A$. This eigenvalue is called the \textit{spectral radius} of $A$ and we denote it  by $\rho(A).$

We naturally define a given flow for PSVF $Z$ as 
\begin{equation*}
	\left. 
	\begin{array}{cc}
		T : \mathbb{R} \times \Omega \to  \Omega\\
		\quad \quad \quad \quad \quad \quad \quad \quad \quad (t,\gamma)\mapsto T(t,\gamma)(\cdot) = \gamma(\cdot + t) ,
	\end{array}
	\right. 
\end{equation*}	
where $\Omega = \left\lbrace \gamma :
\textrm{global trajectory of} \; Z \right\rbrace.$  Then we have the time one  map  $T_1(\gamma) = \gamma(. + 1).$

Based on what we have seen so far, we present the following definition:

\begin{definition}
	\label{pressuredef}
	Let $Z = (X_{+}, X_{-} )$ be a PSVF defined over a compact $2$-dimensional surface $M$ and  $\widehat{\Omega} \subseteq \Omega = \left\lbrace \gamma :
	\textrm{global trajectory of} \, Z \right\rbrace $. We define the topological pressure $P$ of $Z = (X_{+}, X_{-} )$ on $M$, as the topological pressure of the map $T_1$ in $ \widehat{\Omega} \subseteq \Omega$, that is, $P(Z) := P(T_1 \left|_{\widehat{\Omega}}, - \beta \log \mid \det J_{\mu}  {T}_1\mid \right.).$ 
\end{definition}

\begin{remark}
	The classical (in the smooth context) definition of topological pressure and its properties used in this work are found in  Subsection (\ref{press}). Also in the section (\ref{psvf}) we define the time-one map $T_1.$
\end{remark}

\begin{remark}
	As the concept of topological pressure is a generalization of the concept of topological entropy,  Definition (\ref{pressuredef}) agrees with  Definition $4.2$ of entropy for PSVF given in \cite{andre2}.
\end{remark}

\begin{remark}[\textbf{Construction of the "petals"}]\label{constru}
	Before stating the main results of this paper, we will make the following construction: consider the PSVF:
	\begin{equation*}
		\label{deck}
		{Z} (x, y) =  \left\{ \begin{array}{cc} \left.\begin{array}{l} 
				X_{+} (x,y)   \; = \; \left( 1, 1 - x \right)  \; \textrm{for}  \; y \; \geq \; 0  \\
				X_{-} (x, y) \; = \; \left( -1, 1 - x \right)  \; \textrm{for}  \; y \; \leq \; 0, 			
			\end{array} \right.
		\end{array}
		\right.
	\end{equation*}
	where $\Sigma = \{ y = 0 \}.$
	
	They are both symmetric and the point $(1, 0)$ is an invisible-invisible	two-fold, and there is a closed trajectory that goes through the origin $(0, 0)$. Take $k = 3$, consider six rays from the origin (one at each multiple of $\frac{\pi}{3}$). In that way, the	plane is divided into six different regions. Number each region from $1$ to $6$, counter clockwise. 	We define $X_{+}$ in region 1, and $x_{-}$ in region 6. Now, define a vector field in each one of these regions, such that in regions 3 and 5 we have a phase portrait that is the one of $X_{+}$ rotated, and in 	regions 2 and 4 the phase portrait is symmetric to the one of $X_{-}$ (see Figure \ref{casok3}). Now there are	three closed arcs that goes through the origin. That defines a PSVF with six different regions
	such that, apart from three invisible two-folds, and the origin, every other point is either regular	or crossing, and every trajectory that does not goes through the origin is closed	(for more details see \cite{andre1}).
	
	\begin{figure}
		\begin{center}
			\begin{tikzpicture}[scale=0.8]
				\draw[thick, red] (-2,0) .. controls (0,1.1) .. (2,0);
				\draw[thick, red] (-2,0) .. controls (0,-1.1) .. (2,0);
				\draw[thick, lightgray] (-0.5,0) .. controls (0,0.275) .. (0.5,0);
				\draw[thick, lightgray] (-0.5,0) .. controls (0,-0.275) .. (0.5,0);		
				\draw[thick, lightgray] (-1.45,1.0) .. controls (0,1.5) .. (3.0,0);	
				\draw[thick, lightgray] (-1.45,-1.0) .. controls (0,-1.5) .. (3.0,0);
				\draw[thick, lightgray, rotate around={120:(-2.0,0.0)}] (-1.45,1.0) .. controls (0,1.5) .. (3.0,0);	
				\draw[thick, lightgray, rotate around={120:(-2.0,0.0)}] (-1.45,-1.0) .. controls (0,-1.5) .. (3.0,0);		
				\draw[thick, lightgray, rotate around={240:(-2.0,0.0)}] (-1.45,1.0) .. controls (0,1.5) .. (3.0,0);	
				\draw[thick, lightgray, rotate around={240:(-2.0,0.0)}] (-1.45,-1.0) .. controls (0,-1.5) .. (3.0,0);	
				\draw[thick,lightgray, rotate around={120:(-2.0,0.0)}] (-0.5,0) .. controls (0,0.275) .. (0.5,0);
				\draw[thick, lightgray, rotate around={120:(-2.0,0.0)}] (-0.5,0) .. controls (0,-0.275) .. (0.5,0);
				\draw[thick,lightgray, rotate around={240:(-2.0,0.0)}] (-0.5,0) .. controls (0,0.275) .. (0.5,0);
				\draw[thick, lightgray, rotate around={240:(-2.0,0.0)}] (-0.5,0) .. controls (0,-0.275) .. (0.5,0);
				\draw[thick,red, rotate around={120:(-2,0)}] (-2,0) .. controls (0,1.1) .. (2,0);
				\draw[thick, red, rotate around={120:(-2,0)}] (-2,0) .. controls (0,-1.1) .. (2,0);
				\draw[thick, red, rotate around={240:(-2,0)}] (-2,0) .. controls (0,1.1) .. (2,0);
				\draw[thick, red, rotate around={240:(-2,0)}] (-2,0) .. controls (0,-1.1) .. (2,0);	
				\draw[thick,black, loosely dashed] (-6.5,0.0)--(4,0.0);
				\draw[thick,black, loosely dashed, rotate=120] (-4.2,1.74)--(6.0,1.74);
				\draw[thick,black, loosely dashed, rotate=240] (6.6,-1.74)--(-3.58,-1.74);		
				\draw [thick, red,->] (-2,0) to [out=30,in=180]   (0,0.8);
				\draw [thick, red,->] (2,0) to [out=200,in=0]   (0,-0.8);
				\draw [thick, red,->, rotate around={120:(-2,0)}] (-2,0) to [out=30,in=180]   (0,0.8);
				\draw [thick, red,->, rotate around={240:(-2,0)}] (-2,0) to [out=30,in=180]   (0,0.8);	
				\draw [thick, red,->, rotate around={120:(-2,0)}] (2,0) to [out=200,in=0]   (0,-0.8);
				\draw [thick, red,->, rotate around={240:(-2,0)}] (2,0) to [out=200,in=0]   (0,-0.8);			
				\draw (0.1,-0.1) node[ below  left  ] {$ I_1 $};
				\draw (-2.1,2.2) node[ below  left  ] {$ I_2 $};
				\draw (-2.2,-1.5) node[ below  left  ] {$ I_3 $};		
			\end{tikzpicture}
		\end{center}
		\caption{Case $k=3$. Set highlighted in red is invariant for $Z$} \label{casok3}
	\end{figure}
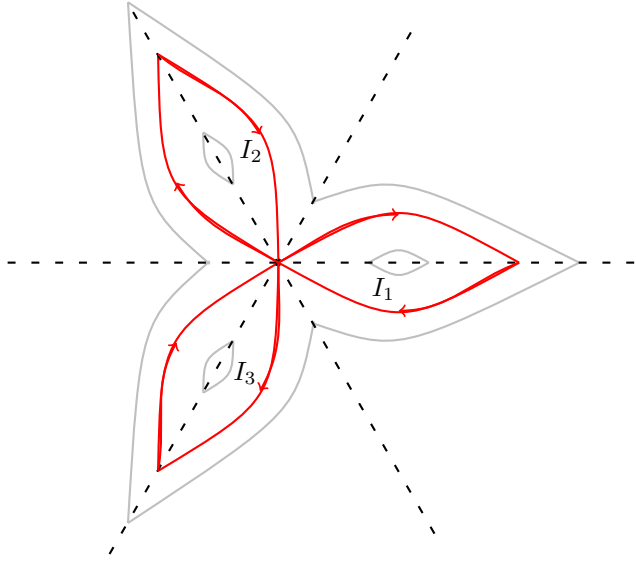
	
	The union of the curves $I_1, I_2, I_3$ highlighted in Figure 1 is invariant to the PSVF and a trajectory in this is the amalgamation of these three different arcs in every possible combination.		
\end{remark}

\section{Main results}

\begin{theorem}
	\label{um}
%
	Given $k \in \mathbb{Z}$, $k \geq 2$ there exists a PSVF $Z$, as in the construction done in Remark  \ref{constru}, with $k$ petals. Then exist  $\widehat{\Omega} \subseteq \Omega = \left\lbrace \gamma :
	\textrm{global trajectory of} \, Z \right\rbrace $ such that $P( T_1\left|_{\widehat{\Omega}} \right. , - \beta \log \mid \det J_{med} {T}_1 \mid) =  
	\log \left( \rho(A)\right)$, where $A$ is an irreducible matrix associated with an oriented graph given by the trajectories of $\widehat{\Omega} .$   
\end{theorem}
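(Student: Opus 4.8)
The plan is to reduce the computation of the topological pressure of the time-one map $T_1$ restricted to $\widehat{\Omega}$ to a symbolic-dynamics calculation via the Perron--Frobenius machinery established above. First I would make the choice of $\widehat{\Omega}$ explicit: following Remark \ref{constru}, take $\widehat{\Omega}$ to be the set of maximal trajectories lying in the invariant union of the $k$ closed arcs $I_1,\dots,I_k$ through the origin. Each non-origin trajectory, as noted in the Remark, is an amalgamation of these $k$ arcs in every admissible combination, so the natural coding sends a trajectory to the bi-infinite sequence of arc-labels it visits. I would then verify that this coding is a topological conjugacy between $T_1|_{\widehat{\Omega}}$ (or a suitable return/section map of it) and a subshift of finite type $\sigma_A$ on $k$ symbols, where the transition matrix $A$ records which arc can follow which at the origin; because at the origin every arc can be followed by every arc (the trajectory may switch to any of the $k$ petals), $A$ is the $k \times k$ all-ones matrix, which is irreducible, and it is the adjacency matrix of the oriented graph whose vertices are the petals — this is the matrix $A$ in the statement.

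Next I would bring in the potential. By Definition \ref{pressuredef} the relevant potential is the geometric potential $-\beta\log|\det J_{med}T_1|$. Here the key structural fact, guaranteed by the hypotheses on refractive, divergence-free systems (the paragraph invoking Corollary A of \cite{nova}), is that $Z$ preserves Lebesgue measure $med$; hence $|\det J_{med}T_1|\equiv 1$ along every trajectory, so $\log|\det J_{med}T_1|\equiv 0$ and the potential vanishes identically. Consequently $P(T_1|_{\widehat{\Omega}},\,-\beta\log|\det J_{med}T_1|) = P(T_1|_{\widehat{\Omega}},0) = h_{top}(T_1|_{\widehat{\Omega}})$, the topological entropy, by the standard properties of pressure recalled in Subsection \ref{press}. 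This is exactly the point where Definition \ref{pressuredef} is compatible with the entropy definition of \cite{andre2}, as remarked after it.

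It then remains to identify this entropy with $\log\rho(A)$. Using the conjugacy from the first step, $h_{top}(T_1|_{\widehat{\Omega}}) = h_{top}(\sigma_A)$, and the classical formula for the entropy of a subshift of finite type gives $h_{top}(\sigma_A) = \log\rho(A)$, where $\rho(A)$ is the spectral radius of $A$; by Theorem A\ref{tpf}, since $A$ is irreducible (indeed the all-ones matrix, with $\rho(A)=k$), this spectral radius is the simple Perron eigenvalue, so the answer is the well-defined quantity $\log\rho(A)$ (here $\log k$). One subtlety to handle carefully is the passage from the continuous-time flow to the discrete time-one map: trajectories in $\widehat{\Omega}$ are reparametrized by time, and one must check that the suspension-flow relationship between $T_1$ and the arc-to-arc symbolic map does not distort entropy — this is where I would be most careful, using that each arc is traversed in a fixed finite time so the roof function is bounded away from $0$ and $\infty$, and Abramov's formula (or a direct argument as in \cite{andre2}) then shows the entropy of the time-one map equals that of the symbolic system up to the harmless constant factor, which is absorbed in the choice of $\widehat{\Omega}$ and the normalization. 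The main obstacle, then, is not any single hard estimate but making the coding genuinely a conjugacy: one must treat the origin (a singular tangency, fixed by the flow) and the points where trajectories are tangent to the switching manifold as a measure-zero/boundary set that does not affect the pressure, and confirm that the admissibility constraints encoded by $A$ are exactly the all-ones pattern and nothing more.
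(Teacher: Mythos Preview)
Your proposal has a genuine gap in the treatment of the geometric potential. You argue that because the PSVF preserves Lebesgue measure on the plane, $|\det J_{med}T_1|\equiv 1$ and hence the potential $-\beta\log|\det J_{med}T_1|$ vanishes identically, reducing the pressure to the topological entropy for every $\beta$. This is not how the paper interprets the Jacobian, and it leads to a conclusion that contradicts the paper's result. In the paper's proof, the matrix representing the RPF operator has entries
\[
A_{ij}=\left(\frac{med\big(\mathcal{P}_j\cap\phi_Z^{-1}(\mathcal{P}_i)\big)}{med(\mathcal{P}_j)}\right)^{\beta},
\]
i.e.\ a circulant matrix with entries $p_1^{\beta},\dots,p_m^{\beta}$ where $\sum_k p_k=1$. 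This matrix depends nontrivially on $\beta$: for $\beta=0$ it is the all-ones adjacency matrix (your matrix, giving $P(0)=h_{top}=\log k$), but for $\beta=1$ it is stochastic and the Perron--Frobenius Theorem gives $\rho(A)=1$, so $P(1)=0$. Your argument would force $P(\beta)=\log k$ for all $\beta$, which is false.

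The underlying issue is that volume preservation of the flow on $\mathbb{R}^2$ does not translate into a trivial Jacobian for $T_1$ acting on $\widehat{\Omega}$. The relevant ``Jacobian'' is computed from the Markov partition on the trajectory space: the branching at the origin means that the inverse branches of the symbolic map contract the reference measure by the factors $p_k$, and these are precisely the weights entering the transfer matrix. So your coding/conjugacy step and your identification of the graph are fine (and match the paper for $\beta=0$), but the step where you kill the potential must be replaced by the paper's construction of the $\beta$-dependent transfer matrix, followed by the appeal to the Perron--Frobenius Theorem and Corollary~2.3 of \cite{cioletti} to identify $P(\beta)$ with $\log\rho(A)$.
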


\begin{theorem}
	\label{dois}
Consider the following PSVF:
\begin{equation*}
	\label{zk}
{Z_{k}} (x, y) =  \left\{ \begin{array}{cc} \left.\begin{array}{l} 
		X_{{+}_{k}} (x,y)   \; = \; \left( 1, P^{'}_k (x) \right)  \; \textrm{for}  \; y \; \geq \; 0  \\
		X_{{-}_{k}} (x, y) \; = \; \left( -1, P^{'}_k (x) \right)  \; \textrm{for}  \; y \; \leq \; 0,
			\end{array} \right.
\end{array}
\right. 
\end{equation*}where 
\begin{equation*}
	\label{pk}
	P_k(x) = - \left( x + \frac{k-1}{2}\right) \left( x -  \frac{k-1}{2}\right) \prod_{i=1}^{k-1} \left(x - \left( i - \frac{k}{2}\right)  \right)^2 , \quad k \geq 3
\end{equation*}such that $\Sigma = \{ y = 0 \} .$ Then for $k \geq 3$ exist $\widehat{\Omega} \subseteq \overline{\Omega}_k$ satisfying  $P( \overline{T}_1 \left|_{\widehat{\Omega}}\right., - \beta \log \mid \det J_{med} \overline{T}_1 \mid) = \log \left( \rho (A) \right)$, where $A$ is an irreducible  matrix associated with an oriented graph given by the trajectories of  $\widehat{\Omega} \subseteq \overline{\Omega}_k$.

\end{theorem}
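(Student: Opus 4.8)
The plan is to run, line by line, the argument behind Theorem \ref{um}: inside the phase space of $Z_k$ we exhibit an invariant ``chain of $k$ petals'' on which the time-one dynamics is coded by a finite oriented graph, and we use that $Z_k$ preserves $med$ to kill the geometric potential, so that the topological pressure collapses to the topological entropy of the associated subshift of finite type; the only genuinely new ingredient is the geometric analysis of the polynomial $P_k$. \textbf{Step 1 (admissibility and volume preservation).} With $f(x,y)=y$ one has $X_{+_k}f=X_{-_k}f=P_k'(x)$ on $\Sigma=\{y=0\}$, so $Z_k$ is refractive, and $\mathrm{div}(X_{\pm_k})=\partial_x(\pm 1)+\partial_y(P_k'(x))=0$, so both half-fields preserve area on $\Sigma^{\pm}$. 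Hence, by Corollary A of \cite{nova}, $Z_k$ preserves $med$, and therefore $\overline{T}_1$ preserves $med$ on $\overline{\Omega}_k$; in particular $\mid\det J_{med}\overline{T}_1\mid\equiv 1$, so the geometric potential $-\beta\log\mid\det J_{med}\overline{T}_1\mid$ vanishes identically.

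\textbf{Step 2 (the petals).} The roots of $P_k$ are $\pm\frac{k-1}{2}$ (simple) and $d_i:=i-\frac{k}{2}$, $i=1,\dots,k-1$ (double); since $P_k(x)\sim -x^{2k}$, one checks that $P_k>0$ on $\big(-\frac{k-1}{2},\frac{k-1}{2}\big)$ off the $d_i$, so $P_k$ has exactly $k$ humps. The orbits of $X_{+_k}$ are the graphs $y=P_k(x)+c$ traversed left to right, those of $X_{-_k}$ the graphs $y=-P_k(x)+c$ traversed right to left; hence the set $\mathcal{P}_k:=\{(x,P_k(x)):|x|\le\frac{k-1}{2}\}\cup\{(x,-P_k(x)):|x|\le\frac{k-1}{2}\}$ is invariant under $Z_k$ and is a chain of $k$ lens-shaped petals glued at the points $(d_i,0)$. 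At each $(d_i,0)$ we have $P_k(d_i)=P_k'(d_i)=0$ and $X_{+_k}^2f(d_i)=P_k''(d_i)\ne 0$, $X_{-_k}^2f(d_i)=-P_k''(d_i)\ne 0$, and the orbit of $X_{+_k}|_{\Sigma^{+}}$ (resp. $X_{-_k}|_{\Sigma^{-}}$) along $\mathcal{P}_k$ reaches $(d_i,0)$ in finite time, so each $(d_i,0)$ is a regular (visible--visible) two-fold, whereas the two outermost contact points $(\pm\frac{k-1}{2},0)$ are ordinary crossing points. Thus a global trajectory contained in $\mathcal{P}_k$ travels along petal arcs and may branch only at the $d_i$, exactly the ``petal'' mechanism of Remark \ref{constru}.

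\textbf{Step 3 (coding and computation of the pressure).} Let $\widehat{\Omega}\subseteq\overline{\Omega}_k$ be the closed, $\overline{T}_1$-invariant set of global trajectories of $Z_k$ contained in $\mathcal{P}_k$. Coding a trajectory by its bi-infinite itinerary of petal arcs (after the usual normalization of transit times used in \cite{andre1,andre2} and in the proof of Theorem \ref{um}) gives a topological conjugacy between $\overline{T}_1|_{\widehat{\Omega}}$ and a subshift of finite type $\sigma_A$, where $A$ is the $0$--$1$ transition matrix of the oriented graph recording the admissible passages (through a crossing point, or through a branching at some $d_i$); for $k\ge 3$ this graph is strongly connected, so $A$ is irreducible. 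Since the geometric potential is identically zero by Step 1,
\[
P\big(\overline{T}_1|_{\widehat{\Omega}},\,-\beta\log\mid\det J_{med}\overline{T}_1\mid\big)=P(\sigma_A,0)=h_{top}(\sigma_A)=\log\rho(A),
\]
the last equality being the classical formula for the entropy of an irreducible subshift of finite type, itself an application of the Perron--Frobenius Theorem (Theorem \ref{tpf}). This is the assertion.

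\textbf{Main obstacle.} The delicate points are the global analysis of Step 2 --- proving that $y=\pm P_k(x)$ genuinely close up into $k$ petals with no extra tangencies or sliding regions and that every contact point with $\Sigma$ has precisely the claimed type --- and the fine print of Step 3, namely checking that the passage from the continuous-time map $\overline{T}_1$ to the symbolic shift is an honest topological conjugacy on a compact set and that the normalization to unit transit times does not alter $\rho(A)$. These are handled exactly as for Theorem \ref{um}; only the combinatorics of the graph (here a ``caterpillar'' instead of the ``flower'' of Remark \ref{constru}) is different.
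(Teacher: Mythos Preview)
Your Step 1 contains a genuine error that undermines the entire argument. You claim that because $Z_k$ preserves $med$ on the plane (which is correct, since the system is refractive and divergence-free), it follows that $\overline{T}_1$ preserves $med$ on $\overline{\Omega}_k$ and hence $\mid\det J_{med}\overline{T}_1\mid\equiv 1$. This inference is a category error: $\overline{\Omega}_k$ is a quotient space of equivalence classes of trajectories, not a subset of $\mathbb{R}^2$, so there is no direct sense in which the planar Lebesgue measure lives on it. More importantly, in the RPF formalism the relevant ``Jacobian'' is the \emph{local} expansion factor on inverse branches, not a global volume-preservation statement. At each visible--visible two-fold $(d_i,0)$ the forward trajectory branches, so $\overline{T}_1$ is many-to-one on the symbolic space and its Jacobian on each branch is a genuine transition probability $p_i\ne 1$.

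The paper's own proof makes this explicit: it writes down the transfer matrix
\[
A=\left[\mathcal{L}^{\ast}_{\overline{T}_1|_{\widehat{\Omega}},\,-\beta\log|\det J_{med}\overline{T}_1|}\right]_{k\times k}
\]
whose entries are $p_1^{\beta},(1-p_1)^{\beta},p_2^{\beta},(1-p_2)^{\beta}$ arranged according to the oriented graph of allowed transitions between the arcs $I_0,\dots,I_k$, and then invokes the Perron--Frobenius Theorem together with Corollary~2.3 of \cite{cioletti} to conclude $P=\log\rho(A)$. In particular the matrix, and hence the pressure, \emph{depends on} $\beta$: for $\beta=0$ one recovers the adjacency matrix and the topological entropy, while for $\beta=1$ the matrix is stochastic and the pressure vanishes. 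Your argument, by killing the potential, collapses the whole family to the single case $\beta=0$ and therefore does not prove the theorem as stated. Your Steps 2 and 3 (the geometry of the petals and the symbolic conjugacy) are fine and indeed parallel the construction in \cite{andre1} that the paper quotes, but you must replace Step 1 by the computation of the $\beta$-dependent transfer matrix as the finite-dimensional representation of the RPF operator.
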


\begin{theorem}
	\label{tres}
Consider the following PSVF:
\begin{equation*}
	{Z_{2}} (x, y) =  \left\{ \begin{array}{cc} \left.\begin{array}{l} 
		X_{2_{+}} (x,y)   \; = \; \left( 1, \frac{x}{2} - 4x^3 \right)  \; \textrm{for}  \; y \; \geq \; 0  \\
		X_{2_{-}} (x, y) \; = \; \left( -1, \frac{x}{2} - 4x^3 \right)  \; \textrm{for}  \; y \; \leq \; 0 
		
	\end{array} \right.
\end{array}
\right.
\end{equation*}
 such that $\Sigma = \{ y = 0 \} .$ Then exist $\widehat{\Omega} \subseteq \overline{\Omega}_2$ such that $P( \overline{T}_1 \left|_{\widehat{\Omega}}\right., - \beta \log \mid \det J_{med} \overline{T}_1 \mid) = \log \left( \rho (A) \right)$, where $A$ is an irreducible  matrix associated with an oriented graph given by the trajectories of $\widehat{\Omega} \subseteq \overline{\Omega}_2$.
\end{theorem}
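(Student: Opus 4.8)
The plan is to reduce the pressure in the statement to the topological entropy of an irreducible subshift of finite type. Note first that $Z_2$ is the $k=2$ instance of the construction of Theorem \ref{dois} (which is stated only for $k\ge3$, the case $k=2$ being set aside because for $k=2$ the origin is a \emph{visible-visible}, hence regular, two-fold rather than a singular one). The key opening remark is that $Z_2$ preserves volume: with $f(x,y)=y$ we have $X_{2_+}f(x,y)=X_{2_-}f(x,y)=\frac{x}{2}-4x^{3}$, so $Z_2$ is refractive, and $div(X_{2_\pm})=\partial_x(\pm1)+\partial_y(\frac{x}{2}-4x^{3})=0$; hence, by Corollary~A of \cite{nova}, the flow $T(t,\cdot)$, and in particular the time-one map $\overline{T}_1$, preserves $med$. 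Therefore $\mid\det J_{med}\overline{T}_1\mid\equiv1$ along trajectories, the geometric potential $-\beta\log\mid\det J_{med}\overline{T}_1\mid$ vanishes identically for every $\beta$, and, by the properties of pressure recalled in Subsection (\ref{press}),
\[
P\!\left(\overline{T}_1|_{\widehat{\Omega}},\,-\beta\log\mid\det J_{med}\overline{T}_1\mid\right)=P\!\left(\overline{T}_1|_{\widehat{\Omega}},\,0\right)=h_{\mathrm{top}}\!\left(\overline{T}_1|_{\widehat{\Omega}}\right).
\]

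Next I would describe the global dynamics and construct $\widehat{\Omega}$. The tangency set of $Z_2$ on $\Sigma=\{y=0\}$ consists of the zeros $x=0,\ \pm\frac{\sqrt2}{4}$ of $P_2'(x)=\frac{x}{2}-4x^{3}$, where $P_2(x)=\frac{x^{2}}{4}-x^{4}$; from $X_{2_\pm}^{2}f=\pm(\frac12-12x^{2})$ one checks that $(0,0)$ is a visible-visible (regular) two-fold, while $(\pm\frac{\sqrt2}{4},0)$ are invisible-invisible (singular) two-folds. Since $X_{2_+}f\cdot X_{2_-}f=(P_2'(x))^{2}\ge0$ there is no sliding on $\Sigma$, every trajectory is a concatenation of arcs of the level curves $y=P_2(x)+c$ (in $\Sigma^{+}$) and $y=-P_2(x)+c$ (in $\Sigma^{-}$), and, $P_2$ being even, every trajectory avoiding the origin is periodic. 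The two one-parameter families of periodic orbits encircling the invisible two-folds limit onto a ``figure-eight'' set $\Lambda$ through the origin, formed by two loops: $L_R$ around $\frac{\sqrt2}{4}$ (the $\Sigma^{+}$-arc $y=P_2(x)$ from $x=0$ to $x=\frac12$, then the $\Sigma^{-}$-arc $y=-P_2(x)$ from $x=\frac12$ back to $x=0$) and its mirror image $L_L$. Because $\dot x=\pm1$ and $x$ runs over $[0,\frac12]$ on each arc, each loop of $\Lambda$ is traversed in time exactly $1$. I would take $\widehat{\Omega}\subseteq\overline{\Omega}_2$ to be the $\overline{T}_1$-invariant set of global trajectories contained in $\Lambda$; it is compact, and $\overline{T}_1|_{\widehat{\Omega}}$ is well defined even though the ambient flow is incomplete.

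Then I would code this dynamics. The only choice available to a trajectory of $\widehat{\Omega}$ is made at the origin, which, being a regular (visible-visible) tangency, by Definition~\ref{def2}(iii) may be entered along either arc of $\Lambda$ incident to it and left along either one; recording which of $L_R, L_L$ is traversed between consecutive passages through $(0,0)$, and using that each loop has period $1$, the construction of \cite{andre1} identifies $\overline{T}_1|_{\widehat{\Omega}}$ with the subshift of finite type $\sigma_A$ over the alphabet of loops of $\Lambda$, where $A$ is the incidence matrix of the oriented graph whose vertices are the loops and whose edges are the admissible concatenations at $(0,0)$ (here one finds $\rho(A)=2$, so in fact $P(Z_2)=\log 2$, but the statement uses only the general shape and irreducibility of $A$). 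Since any loop can be followed by any loop, $A$ is irreducible, so $h_{\mathrm{top}}(\overline{T}_1|_{\widehat{\Omega}})=h_{\mathrm{top}}(\sigma_A)=\log\rho(A)$ by the classical entropy formula for irreducible subshifts of finite type, $\rho(A)$ being the Perron-Frobenius eigenvalue furnished by Theorem~(A\ref{tpf}). Combined with the display above, this gives $P(\overline{T}_1|_{\widehat{\Omega}},-\beta\log\mid\det J_{med}\overline{T}_1\mid)=\log\rho(A)$.

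The hard part will be this last step: pushing the local analysis at the visible-visible two-fold and at the two invisible ones far enough to pin down exactly the admissible transitions of $\widehat{\Omega}$, and then invoking, adapting where necessary, the machinery of \cite{andre1} to obtain a genuine topological conjugacy of $\overline{T}_1|_{\widehat{\Omega}}$ with $\sigma_A$, in particular checking that the ``position along the current loop'' coordinate contributes neither to the entropy nor to the Jacobian. After that, the evaluation of the pressure is purely formal.
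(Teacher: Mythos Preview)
Your geometric analysis of $Z_2$ is correct and more explicit than the paper's: the origin is the unique visible--visible two-fold on the figure-eight $\Lambda_2$, the loops $I_0,I_1$ are traversed in unit time, and the coding gives a conjugacy of $\overline{T}_1|_{\widehat\Omega}$ with the full $2$-shift, exactly as the paper obtains by citing \cite{andre1}, \cite{andre2}. The divergence is in how the potential is handled, and there your key shortcut has a gap.

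The step ``$|\det J_{med}\overline{T}_1|\equiv 1$ because the flow preserves Lebesgue'' conflates two different objects. Volume preservation of the flow of $Z_2$ on $\mathbb{R}^2$ (which you verify correctly via the refractive condition and Corollary~A of \cite{nova}) concerns $T_1$ on phase space, not $\overline{T}_1$ acting on the quotient trajectory space $\overline{\Omega}_2$, and the paper never defines $med$ as a measure on $\overline{\Omega}_2$. In the paper's framework (cf.\ the proof of Theorem~\ref{um}) the Jacobian entering the geometric potential is computed from the Markov-partition ratios $med(\mathcal P_j\cap\phi_Z^{-1}(\mathcal P_i))/med(\mathcal P_j)$, i.e.\ from the \emph{branch transition probabilities} $p_i$ between the arcs, and these are not $1$ even when the global flow preserves volume. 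Hence the potential does not vanish for $\beta\neq 0$, and your reduction to entropy recovers only the case $\beta=0$.

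What the paper does instead is to keep $\beta$ throughout: after passing to the subshift via $\overline{s}$, it represents the RPF operator $\mathcal L^{\ast}_{\overline{T}_1,\,-\beta\log|\det J_{med}\overline{T}_1|}$ as the $2\times 2$ transfer matrix
\[
A=\begin{pmatrix}p_1^{\beta}&p_2^{\beta}\\ p_2^{\beta}&p_1^{\beta}\end{pmatrix},
\]
notes that this is the (weighted) incidence matrix of the associated oriented graph, and then invokes the Perron--Frobenius Theorem together with Corollary~2.3 of \cite{cioletti} to conclude $P=\log\rho(A)$. In particular $\rho(A)=p_1^{\beta}+p_2^{\beta}$ genuinely depends on $\beta$ (e.g.\ $P=\log 2$ at $\beta=0$ but $P=0$ at $\beta=1$), so the $\beta$-dependent matrix representation, not a reduction to entropy, is the actual content of the theorem.
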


\begin{corollary}
		\label{coro}
	Given $1 < \alpha \leq 2 $, there are trajectories of $Z_2$ such that $h_{top}(Z_2) = \log \alpha .$ Furthermore for each $s \in (0, \log 2]$ there exists a set  $A_s$ such that  $dim_{\mathcal{H}}(A_s) = \log \alpha = h_{top}(Z_2 \left|_{\widehat{\Omega}} \right.) .$
\end{corollary}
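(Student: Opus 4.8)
The plan is to reduce Corollary~\ref{coro} to Theorem~\ref{tres} together with a standard fact relating the topological entropy of a subshift of finite type to the spectral radius of its transition matrix, and then to invoke the bridge between entropy and Hausdorff dimension coming from the geometric potential (the pressure equation $P(T_1|_{\widehat\Omega},-\beta\log|\det J_{med}T_1|)=0$ at $\beta=\dim_{\mathcal H}$). First I would recall, from the construction underlying $Z_2$ (the two-petal analogue of Remark~\ref{constru} with the specific cubic $\frac{x}{2}-4x^3$), that the invariant set $\widehat\Omega$ of global trajectories is conjugate to a one-sided subshift $\Sigma_A$ on finitely many symbols, where the symbols encode which arc of the invariant ``figure-eight'' the trajectory traverses between consecutive passages through the origin, and $A$ is the associated irreducible $0$--$1$ (or nonnegative integer) matrix. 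By Theorem~\ref{tres}, $P(\overline T_1|_{\widehat\Omega},-\beta\log|\det J_{med}\overline T_1|)=\log\rho(A)$; since the PSVF is volume-preserving (refractive and divergence-free, as set up in the Preliminaries), the geometric potential vanishes, so this pressure is exactly the topological entropy, and hence $h_{top}(Z_2|_{\widehat\Omega})=\log\rho(A)$.

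Next I would show that by varying the subgraph one picks inside the full transition graph of $Z_2$ — equivalently, by restricting $\widehat\Omega$ to trajectories that only use a prescribed sub-collection of arcs and concatenation rules — the spectral radius $\rho(A)$ ranges over a dense (in fact, by a continuity/intermediate-value argument, a full) subset of the interval $(1,2]$. The endpoint $\rho=2$ is attained by the full two-petal graph (every symbol may be followed by every symbol, giving $A=\begin{pmatrix}1&1\\1&1\end{pmatrix}$ up to the relevant bookkeeping, with $\rho(A)=2$), and values arbitrarily close to $1$ are attained by long ``almost-deterministic'' cycles; to get \emph{every} $\alpha\in(1,2]$ rather than a countable set, I would instead use a one-parameter family of weighted graphs, i.e. allow the arcs themselves to be reparametrised so that the relevant incidence weight is a real parameter, exactly as in the ``entropy $=\log b$ for real $b$'' examples advertised in the introduction; then $\rho(A(\alpha))$ depends continuously and monotonically on the parameter and sweeps out $(1,2]$. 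This gives the first assertion: for each $1<\alpha\le2$ there are trajectories of $Z_2$ with $h_{top}(Z_2|_{\widehat\Omega})=\log\alpha$.

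For the Hausdorff-dimension statement I would use the generalized tent map $T_\alpha$ (the ``generalized tent application'' mentioned in the introduction) as the model: the first-return map of $Z_2$ to the section $\Sigma$, read in suitable coordinates on one petal, is conjugate to a piecewise-linear expanding map of slope $\pm\alpha$, whose repeller $A_s$ is a self-similar Cantor set. For such a map the pressure equation $P(T_\alpha,-s\log|T_\alpha'|)=0$ has the unique solution $s=\dim_{\mathcal H}(A_s)=\frac{\log\alpha}{\log\alpha}\cdot(\text{ratio})$; more precisely, adjusting the number of linear branches so that the repeller has $n$ branches of slope $\alpha$, Moran's formula gives $\dim_{\mathcal H}(A_s)=\frac{\log n}{\log\alpha}$, and choosing the branch count (or, again, a real reparametrisation) lets $\dim_{\mathcal H}(A_s)$ take any prescribed value in $(0,\log 2]$ while simultaneously $\log n=\log\rho(A)=h_{top}(Z_2|_{\widehat\Omega})$. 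Identifying the symbolic coding of $A_s$ with that of $\widehat\Omega$ then yields $\dim_{\mathcal H}(A_s)=\log\alpha=h_{top}(Z_2|_{\widehat\Omega})$ as claimed.

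The main obstacle I expect is not the symbolic dynamics but the two bookkeeping identities that make the numbers line up: first, verifying that the volume-preservation hypothesis really does force the geometric potential to be cohomologous to zero along $\widehat\Omega$ (so that pressure collapses to entropy and Theorem~\ref{tres} can be quoted verbatim), and second, checking that the first-return map of this particular $Z_2$ — with the cubic $\frac{x}{2}-4x^3$ chosen precisely so that the return dynamics is \emph{affine} with slope $2$ on each petal — is genuinely conjugate to a full generalized tent map rather than merely semi-conjugate. Getting the constant right in $\frac{x}{2}-4x^3$ (the coefficients are rigged so the two fixed tangency points sit at the correct places and the return map has slope exactly $2$) is the delicate computational point; once that is pinned down, the ranges of $\rho(A)$ and of $\dim_{\mathcal H}$ follow from continuity and Moran's equation, and the equality $\dim_{\mathcal H}(A_s)=\log\alpha=h_{top}$ is then immediate from the shared coding.
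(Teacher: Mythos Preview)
Your proposal takes a substantially different and more elaborate route than the paper, and in doing so introduces a genuine gap.

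For the entropy part, the paper does \emph{not} vary transition matrices or weighted subgraphs. Instead it goes through the generalized tent map $\mathcal{T}_\alpha$ directly: by results in de Vries, $([0,1],\mathcal{T}_\alpha)$ is (up to an at most $2$-to-$1$ factor) a subshift $(\mathcal{Z},\sigma_{\mathcal{Z}})$ of $\{0,1\}^{\mathbb{Z}}$ with $h_{top}(\sigma_{\mathcal{Z}})=h_{top}(\mathcal{T}_\alpha)=\log\alpha$. Since the proof of Theorem~\ref{tres} shows $\overline{s}$ conjugates $\overline{T}_1$ on $\overline{\Omega}_2$ to $\sigma_2$ on the full $2$-shift, one simply pulls $\mathcal{Z}$ back through $\overline{s}$ to obtain $\widehat{\Omega}\subseteq\overline{\Omega}_2$ with entropy $\log\alpha$. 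Your subgraph approach cannot work as stated: irreducible subgraphs of a fixed finite graph yield only countably many spectral radii, and your proposed fix via ``real reparametrisation of arc weights'' no longer corresponds to restricting to a subset of trajectories $\widehat{\Omega}$, so it does not produce the objects the corollary asks for. The tent-map subshifts used by the paper are typically \emph{not} of finite type, which is exactly what lets $\log\alpha$ range continuously over $(1,2]$. Also note that the reduction from pressure to entropy is simply $P(f,0)=h_{top}(f)$ (take $\beta=0$); the volume-preservation of $Z_2$ is not what collapses the geometric potential here, and indeed the paper's own computations show the pressure depends nontrivially on $\beta$.

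For the Hausdorff-dimension part, the paper's argument is far more modest than yours: it merely invokes a general existence result (Corollary~5.1.18 of Worth) asserting that for every $s\in(0,\log 2]\subset[0,1]$ there is a set $A_s$ with $\dim_{\mathcal{H}}(A_s)=s$, and then matches numbers via $s=\log\alpha$. The set $A_s$ carries no dynamical meaning and is not constructed from $Z_2$ or any return map. Your Bowen-equation/Moran-formula construction of $A_s$ as a repeller is conceptually more satisfying and closer in spirit to thermodynamic formalism, but it is not what the paper does, and your formula $\dim_{\mathcal{H}}(A_s)=\frac{\log n}{\log\alpha}$ does not land in $(0,\log 2]$ for the natural choices (e.g.\ $n=2$, $\alpha=2$ gives $1$, not $\log 2$), so the numerics there would need to be reworked in any case.
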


\begin{remark}
In the section (\ref{psvf}) we define $\overline{T}_1$, the set $\overline{\Omega}_k, \, k \geq 2$ and explain why we use $\overline{T}_1 $ instead of $T_1$ to calculate topological pressure in Theorems (\ref{dois}) and (\ref{tres}). In addition the definition of Hausdorff Dimension $dim_{\mathcal{H}} (A_{s})$ is given in Subsection [\ref{hausd}] of the Appendix.
\end{remark}


\section{Proof of the main results}\label{secao prova resultados principais}

\subsection{ Proof of Theorem (\ref{um}) }
 \begin{proof}

 Consider a Markov partition $\mathcal{P} = \left\lbrace P_ 0, \cdots, P_{k-1}\right\rbrace $ in the domain of $Z$ such that $\mid \mathcal{P} \mid = k, \; med \left( \mathcal{P}_j \right) > 0  \; \forall j \in \{0,1, \cdots, k-1 \} $  and $$ \phi_Z(\mathcal{P}_j) = \bigcup_{B \subset \mathcal{C} \subset \mathcal{P}_j} B \quad \textrm{for all} \quad \mathcal{P}_j \in \mathcal{P} . $$  So we will have the matrix that represents RPF for the geometric potential  given by :
 
 \begin{equation*}
 	\label{petala}
 		\left.\begin{array}{lll}
 	\left[ \mathcal{L}^{\ast}_{{T}_1 \left|_{\widehat{\Omega}} \right. , - \beta \log \mid \det J_{med} {T}_1 \mid}\right]_{m \times m} = \left[A_{ij} \right]_{m \times m} :  \\ 
 	
 	A_{ij} = \left\{ \begin{array}{c}
 	\left( \frac{med\left( \mathcal{P}_j \cap \phi_{Z}^{-1}\left(\mathcal{P}_i \right) \right) }{med \left(\mathcal{P}_j \right) }\right)^{\beta}, \quad 
 	med\left( \mathcal{P}_j \cap \phi_{Z}^{-1}\left(\mathcal{P}_i \right) \right) \neq 0  \\
 		0, \quad \quad  
 		med\left( \mathcal{P}_j \cap \phi_{Z}^{-1}\left(\mathcal{P}_i \right) \right) = 0 .
 	\end{array}  
 	\right.

 		\end{array} \right.
 \end{equation*}
In this way we get the following matrix:

\begin{equation*}
A = 	\left[ \mathcal{L}^{\ast}_{{T}_1 \left|_{\widehat{\Omega}} \right. , - \beta \log \mid \det J_{med} {T}_1 \mid}\right]_{m \times m} =  \begin{pmatrix}
	p_1^{\beta} & 	p_2^{\beta} & \cdots & 	p_m^{\beta} \\
	p_m^{\beta} & 	p_1^{\beta} & \cdots & 	p_{m-1}^{\beta} \\
	\vdots & \vdots & \ddots & \vdots \\
	p_2^{\beta} & 	p_3^{\beta} & \cdots & 	p_1^{\beta} 
	
\end{pmatrix},
\end{equation*}
such that $ \sum_{k = 1}^{m} p_k = 1 ,$ $p_k$ can be saw as the probability of $\phi_{Z}(x) \in I_k$ if $x \in I_i$,  for all $1 \leq  i \leq m, $ where $m$ is the number of petals (for a reference, the red curve on Figure \ref{casok3} has $3$ petals). Note that each subset of trajectories $ \widehat{\Omega} \subset \Omega$ is associated with a graph and further by Lemma $5.5.1$ of \cite{vries} (see the example \ref{exemp1}), each graph is associated with a subshift of finite type. Therefore, by the Perron-Frobenius Theorem (A \ref{tpf}) and by the Corollary $2.3$ of \cite{cioletti}, we get $P( T_1\left|_{\widehat{\Omega}} \right., - \beta \log \mid \det J_{ med} {T}_1 \mid) = \log \left( \rho(A) \right) $. 

Furthermore if $\beta = 0$, the transfer matrix is an adjacency's matrix and for $ \beta = 1$ this matrix is the stochastic
matrix. As this operator admits a finite representation, the topological pressure is given by \[P(\beta) = \log \rho \left( \mathcal{L}^{\ast}_{- \beta \log \mid \det J_{med} {T}_1 \mid } \right) = \log \rho \left( \mathcal{L}_{- \beta \log \mid \det J_{med} {T}_1 \mid } \right) = \log \left( \rho(A) \right).\]Moreover, 
 if $\beta = 1$, by Perron-Frobenius Theorem (A \ref{tpf}), we get $\rho(A) = 1$ and the topological pressure is zero.

 \end{proof}

\subsection{ Proof of Theorem (\ref{dois}) }
\begin{proof}
	Let $\Omega_k, \, k \geq 3$ be the set of all trajectories contained in $\Lambda_k$ and $s : \Omega_k \to   \left\lbrace  0, 1, 2, \cdots, k \right\rbrace^{\mathbb{Z}}$. If we take $\overline{\Omega}_k = \Omega/s$ and the functions $\overline{s}$ and $\overline{T}_1$ (see (\ref{psvf})) as before, we have that $\overline{s}(\overline{\Omega}_k)$ is a subshift of $\left\lbrace  0, 1, 2, \cdots, k \right\rbrace^{\mathbb{Z}}$ and $\overline{s}$ is a conjugation between $\overline{T}_1$ and the shift $\sigma_k$.  Note that such subshift is associated to the transition matrix:
	\begin{equation*}
		\label{matrixa}
		A = 	\left[ \mathcal{L}^{\ast}_{\overline{T}_1 \left|_{\widehat{\Omega}} \right. , - \beta \log \mid \det J_{med} \overline{T}_1 \mid}\right]_{k \times k} =  \begin{pmatrix}
			p_1^{\beta} & 	(1 - p_1)^{\beta} & 0 & 0 & \cdots & 0 & 0	 \\
			0 & 0 & p_2^{\beta} & (1 - p_2)^{\beta} & \cdots  & 0 & 0 \\
			(1 - p_2)^{\beta} &  p_2^{\beta} & 0 & 0 & \cdots & 0 & 0 \\
			\vdots & \vdots & \vdots & \vdots & \ddots & \vdots & \vdots\\
			0 & 0 & 0 & 0 & \cdots & 	(1 - p_1)^{\beta} &	p_1^{\beta} 
			
		\end{pmatrix} ,
	\end{equation*}

where $p_1$ can be saw as the probability of $\phi_{Z}(x) \in I_0$ if $x \in I_0$ (or tha probability of $\phi_{Z}(x) \in$ $I_k$ if  $x \in I_k$)
 ; and $p_2$ can be saw as the probability of  $ \phi_{Z}(x) \in I_1$ if $x \in I_1$ (or the probability of  $ \phi_{Z}(x) \in I_j, \; j \in \{1, 2, \cdots k-1 \}$ is $ x \in I_j$). 

Note that $A$ also represents a matrix of an oriented graph, where each entry represents $a_{ij}$ the probability that there is a connection (graph edge) between the arc $I_i$ and the arc $I_j$ (vertices of the graph). Furthermore  each oriented graph is conjugated to a finite subshift (see \cite{vries}, Lemma $5.5.1$) which in turn is conjugated to the space of all trajectories of the vector field $Z_k$ (see \cite{andre2}). So, from the Perron-Frobenius Theorem (A \ref{tpf}) and from the Corollary $2.3$ of \cite{cioletti}, it follows that $$P( T_1\left|_{\widehat{\Omega}} \right., - \beta \log \mid \det J_{ med} {T}_1 \mid) = \log \left( \rho(A) \right). $$

\end{proof}

\subsection{ Proof of Theorem (\ref{tres})}

\begin{proof}
Let $\Omega_2$ be the set of all trajectories contained in $\Lambda_2$ and $s : \Omega_2 \to   \left\lbrace  0, 1\right\rbrace^{\mathbb{Z}}$. If we take $\overline{\Omega}_2 = \Omega/s$ and the functions $\overline{s}$ and $\overline{T}_1$ (see (\ref{psvf})) as before, we have that $\overline{s}(\overline{\Omega}_2)$ is a subshift of $\left\lbrace  0, 1\right\rbrace^{\mathbb{Z}}$ and $\overline{s}$ is a conjugation between $\overline{T}_1$ and the shift $\sigma_2$.  See that such subshift is associated to the transition matrix:

\begin{equation*}
	 A = 	\left[ \mathcal{L}^{\ast}_{\overline{T}_1 \left|_{\widehat{\Omega}} \right. , - \beta \log \mid \det J_{med} \overline{T}_1 \mid}\right]_{m \times m} =  \begin{pmatrix}
		p_1^{\beta} & 	p_2^{\beta} \\
		p_2^{\beta} &  	p_1^{\beta} .
		
	\end{pmatrix}
\end{equation*}

Where $p_1$ can be saw as the probability   if $x \in I_0$ and $p_2$ can be saw as the probability   if $x \in I_1$. Again note that $A$ also represents a matrix of an oriented graph, where each entry represents $a_{ij}$ the probability that there is a connection (graph edge) between the arc $I_0$ and the arc $I_1$ (vertices of the graph). Furthermore  each oriented graph is conjugated to the finite subshift of two symbols (see \cite{vries}, Lemma $5.5.1$) which in turn is conjugated to the space of all trajectories of the field $Z_2$ (see \cite{andre2}). So, from the Perron-Frobenius Theorem (A \ref{tpf}) and from the Corollary $2.3$ of \cite{cioletti}, it follows that $$P( T_1\left|_{\widehat{\Omega}} \right., - \beta \log \mid \det J_{ med} {T}_1 \mid) = \log \left( \rho(A) \right). $$

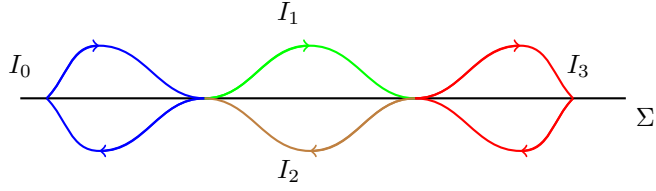
\begin{figure}
	\centering
	\begin{tikzpicture}[scale=0.7]
		\draw [thick, blue,->] (0,0) to [out=45,in=180]   (1,1);
		\draw [thick, blue,->] (3,0) to [out=180,in=0]   (1,-1);			
		\draw [thick, ->, green] (3,0) to [out=0,in=180] (5,1);
		\draw [thick, brown, ->] (7,0) to [out=180,in=0] (5,-1);
		\draw [thick, red, ->, rotate around={180:(5,0)}] (0,0) to [out=45,in=180] (1,1);
		\draw [thick, red,->] (7,0) to [out=0,in=180]   (9,1);	
		\draw[thick,black] (-0.5,0.0)--(11,0.0)node[below  right] {$ \Sigma$};
		\draw (-0.1,1) node[ below  left  ] {$I_0$};
		\draw (5.0,2) node[ below  left  ] {$I_1$};
		\draw (5.0,-1) node[ below  left  ] {$I_2$};
		\draw (10.5,1) node[ below  left  ] {$I_3$};
		\draw [thick, blue] (0,0) to [out=45,in=180] (1,1) to [out=0,in=180] (3,0);
		\draw [thick, blue, rotate around x=180] (0,0) to [out=-45,in=180] (1,1) to [out=0,in=180] (3,0);
		\draw [thick, green] (3,0) to [out=0,in=180] (5,1) to [out=0,in=180] (7,0);
		\draw [thick, brown, rotate around x=180] (3,0) to [out=0,in=180] (5,1) to [out=0,in=180] (7,0);				
		\draw [thick, red, rotate around={180:(5,0)}] (0,0) to [out=45,in=180] (1,1) to [out=0,in=180] (3,0);
		\draw [thick, red, rotate around={180:(5,0)}, rotate around x=180] (0,0) to [out=-45,in=180] (1,1) to [out=0,in=180] (3,0);
\end{tikzpicture}
\caption{Case $k=3$.}
\end{figure}
\end{proof}

\subsection{ Proof of Corollary (\ref{coro})} 

\begin{proof}
Consider the following family of mappings, called the generalized tent map as follows: for $1 < \alpha \leq 2$, let

\begin{equation}
\label{tenda}
\mathcal{T}_{\alpha}(x) := \left\{ \begin{array}{c}
	\alpha x \, \, for \,\, 0 \leq x \leq \frac{1}{2} \\
	\alpha(1 - x) \, \, for \,\, \frac{1}{2} \leq x \leq 1.
	\end{array}  
	\right.
	\end{equation}

%

	It follows from $6.3.14$ of \cite{vries} that for these values of $\alpha$ the system $([0, 1], \mathcal{T}_{\alpha})$ is a factor of some shift system  $(\mathcal{Z}, \sigma_{\mathcal{Z}})$ (for more details see (\ref{tent}) ) 
	over the symbol set $\{0, 1 \}$ under an at most $2$-to-$1$ factor map. So, Theorem $8.2.7$ of \cite{vries} implies that $h_{top}(\mathcal{T}_{\alpha}) =  h_{top}(\sigma_{\mathcal{Z}}) = h_{top}(\sigma_{2}) .$ Therefore, from Section $8.3.4$ of \cite{vries}, it follows that if $1 < \alpha \leq 2$ then $h_{top}(\mathcal{T}_{\alpha}) = \log \alpha .$

	In this way there is a connection between the "world" of shift spaces and the "world" of the generalized tent map. So, from  Theorem (\ref{tres}), it follows that for every $1 < \alpha \leq 2$ we have subsets $\widehat{\Omega} \subseteq \overline{\Omega}_2$ whose entropy $h_{top}(Z_2 \left|_{\widehat{\Omega}} \right.) = h_{top}(T_1 \left|_{\widehat{\Omega}} \right.) = \log \alpha .$
	
	Finally, from Corollary $5.1.18$ of \cite{worth}, for every $s \in (0, \log 2] \subset [0, 1]$ there exists a set $A_s \subset (0, \log 2]$ such that $dim_{\mathcal{H}}(A_s) = s.$ Also, from what we saw in the previous paragraph, for every $s \in (0, \log 2]$, there is $1 < \alpha \leq 2$ such that $s = \log \alpha .$ So it occurs that $$dim_{\mathcal{H}}(A_s)  = \log \alpha = h_{top}(\mathcal{T}_{\alpha}) = h_{top}(Z_2 \left|_{\widehat{\Omega}} \right.) .$$
	\end{proof}
\begin{figure}
	\centering
	\begin{tikzpicture}[scale=0.8]
		\draw [thick, blue,->] (0,0) to [out=45,in=180]   (1,1);
		\draw [thick, blue,->] (3,0) to [out=180,in=0]   (1,-1);
		\draw [thick, red,->] (3,0) to [out=0,in=180]   (5,1);
		\draw [thick, red,->, rotate around={180:(3,0)}] (0,0) to [out=45,in=180] (1,1);
		\draw[thick,black] (-2,0.0)--(8,0.0)node[below  right] {$ \Sigma$};
		\draw (-0.1,1) node[ below  left  ] {$I_0$};
		\draw (7.5,1) node[ below  left  ] {$I_1$};
		\draw [thick, blue] (0,0) to [out=45,in=180] (1,1) to [out=0,in=180] (3,0);
		\draw [thick, blue, rotate around x=180] (0,0) to [out=-45,in=180] (1,1) to [out=0,in=180] (3,0);
		\draw [thick, red, rotate around={180:(3,0)}] (0,0) to [out=45,in=180] (1,1) to [out=0,in=180] (3,0);
		\draw [thick, red, rotate around={180:(3,0)}, rotate around x=180] (0,0) to [out=-45,in=180] (1,1) to [out=0,in=180] (3,0);
		
	\end{tikzpicture}
\end{figure}

\begin{example}
	\label{exemp1}
In Theorem (\ref{dois}), take $\alpha = 4 $. Let $\widehat{\Omega} \subset \Omega$ be  such that \\ $\widehat{\Omega} =  \left\lbrace \textrm{orbits that:} \, \textrm{from} \, I_1 \, \textrm{go to } \, \, I_2, I_3, I_4 \, \textrm{and from} \,I_j \, \textrm{only goes to}\; I_1, \; j = 2, 3 ,4 \right\rbrace$.

	\begin{figure}[!htb]
	\centering
	\begin{tikzpicture}[scale=0.7]
		\draw[thick, red] (-2,0) .. controls (0,1.1) .. (2,0);
		\draw[thick, red] (-2,0) .. controls (0,-1.1) .. (2,0);
		\draw[thick, lightgray] (-0.5,0) .. controls (0,0.275) .. (0.5,0);
		\draw[thick, lightgray] (-0.5,0) .. controls (0,-0.275) .. (0.5,0);
		
		
		\draw[thick, lightgray] (-1.0,1.0) .. controls (0,1.5) .. (3.0,0);	
		\draw[thick, lightgray] (-1.0,-1.0) .. controls (0,-1.5) .. (3.0,0);
		
		\draw[thick, lightgray, rotate around={90:(-2.0,0.0)}] (-1.0,1.0) .. controls (0,1.5) .. (3.0,0);	
		\draw[thick, lightgray, rotate around={90:(-2.0,0.0)}] (-1.0,-1.0) .. controls (0,-1.5) .. (3.0,0);
		
		\draw[thick, lightgray, rotate around={180:(-2.0,0.0)}] (-1.0,1.0) .. controls (0,1.5) .. (3.0,0);	
		\draw[thick, lightgray, rotate around={180:(-2.0,0.0)}] (-1.0,-1.0) .. controls (0,-1.5) .. (3.0,0);
		
		\draw[thick, lightgray, rotate around={270:(-2.0,0.0)}] (-1.0,1.0) .. controls (0,1.5) .. (3.0,0);	
		\draw[thick, lightgray, rotate around={270:(-2.0,0.0)}] (-1.0,-1.0) .. controls (0,-1.5) .. (3.0,0);

		\draw[thick,lightgray, rotate around={90:(-2.0,0.0)}] (-0.5,0) .. controls (0,0.275) .. (0.5,0);
		\draw[thick, lightgray, rotate around={90:(-2.0,0.0)}] (-0.5,0) .. controls (0,-0.275) .. (0.5,0);
		
		\draw[thick,lightgray, rotate around={180:(-2.0,0.0)}] (-0.5,0) .. controls (0,0.275) .. (0.5,0);
		\draw[thick, lightgray, rotate around={180:(-2.0,0.0)}] (-0.5,0) .. controls (0,-0.275) .. (0.5,0);
		
		\draw[thick, lightgray, rotate around={270:(-2.0,0.0)}] (-0.5,0) .. controls (0,0.275) .. (0.5,0);
		\draw[thick, lightgray, rotate around={270:(-2.0,0.0)}] (-0.5,0) .. controls (0,-0.275) .. (0.5,0);
		
		\draw[thick,red, rotate around={90:(-2,0)}] (-2,0) .. controls (0,1.1) .. (2,0);
		\draw[thick, red, rotate around={90:(-2,0)}] (-2,0) .. controls (0,-1.1) .. (2,0);
		\draw[thick, red, rotate around={180:(-2,0)}] (-2,0) .. controls (0,1.1) .. (2,0);
		\draw[thick, red, rotate around={180:(-2,0)}] (-2,0) .. controls (0,-1.1) .. (2,0);
		\draw[thick, red, rotate around={270:(-2,0)}] (-2,0) .. controls (0,1.1) .. (2,0);
		\draw[thick, red, rotate around={270:(-2,0)}] (-2,0) .. controls (0,-1.1) .. (2,0);
		
		
		\draw[thick,black, loosely dashed] (-8,0.0)--(4,0.0);
		\draw[thick,black, loosely dashed, rotate=90] (-6,2.0)--(6.0,2.0);
		\draw[thick,black, loosely dashed, rotate=135] (6,1.4)--(-4,1.4);	
		\draw[thick,black, loosely dashed, rotate=225] (-4.0,-1.4)--(6,-1.4);
		
		
		\draw [thick, red,->] (-2,0) to [out=30,in=180]   (0,0.8);
		\draw [thick, red,->] (2,0) to [out=200,in=0]   (0,-0.8);
		
		\draw [thick, red,->] (-2.0,4.0) to [out=-68,in=80]   (-1.25,1.6);

		\draw [thick, red,->] (-6.0,0.0) to [out=-30,in=180]   (-4.0,-0.8);

		\draw [thick, red,->] (-2,0) to [out=50,in=80]   (-2.75,-1.6);
		
		\draw [thick, red,->] (-2,-4) to [out=55,in=-120]   (-1.26,-2.51);

		\draw [thick, red,->, rotate around={90:(-2,0)}] (-2,0) to [out=30,in=180]   (0,0.8);
		\draw [thick, red,->, rotate around={-180:(-2,0)}, rotate around x=180] (-2,0) to [out=-30,in=180]   (0,0.8);


		\draw (0.1,-0.1) node[ below  left  ] {$ I_1 $};
		\draw (-2.1,2.2) node[ below  left  ] {$ I_2 $};
		\draw (-2.8,0.1) node[ below  left  ] {$ I_3 $};
		\draw (-2.09,-1.3) node[ below  left  ] {$ I_4 $};
		\draw [thick, red,->] (-2,0) to [out=30,in=180]   (0,0.8);
		
		\draw (6,0.0) node[circle, black, draw, fill=red](a1){$I_1$}
		(10, 0) node[circle, black, draw, fill=red](a2){$I_3$}
		(10, 4) node[circle, black, draw, fill=red ](a3){$I_2$}
		(10, -4) node[circle, black, draw, fill=red ](a4){$I_4$};
		
		\draw [->, thick, black] (6.0,1.0) to [out=30,in=180]  (9.2,4.4);
		\draw [->, thick, black] (9.1,4.1) to [out=-180,in=0] (6.7,0.5) ;
		
		\draw [->, thick, black] (6.9,0.0) --  (9.1,0.0);
		\draw [->, thick, black] (9.1,-0.5)  to [out=180,in=0] (6.9,-0.5);
		\draw [->, thick, black] (6.7,-1.0) to [out=-30,in=180]  (9.1,-4.1);
		\draw [->, thick, black] (9,-4.6)  to [out=180,in=-90]  (6.2,-1.0) ;
		\path  (a1) edge [loop left, thick] (a1);
		
	\end{tikzpicture}
\caption{Case  $k = 4$  and the related graph.} \label{casok4}
\end{figure}
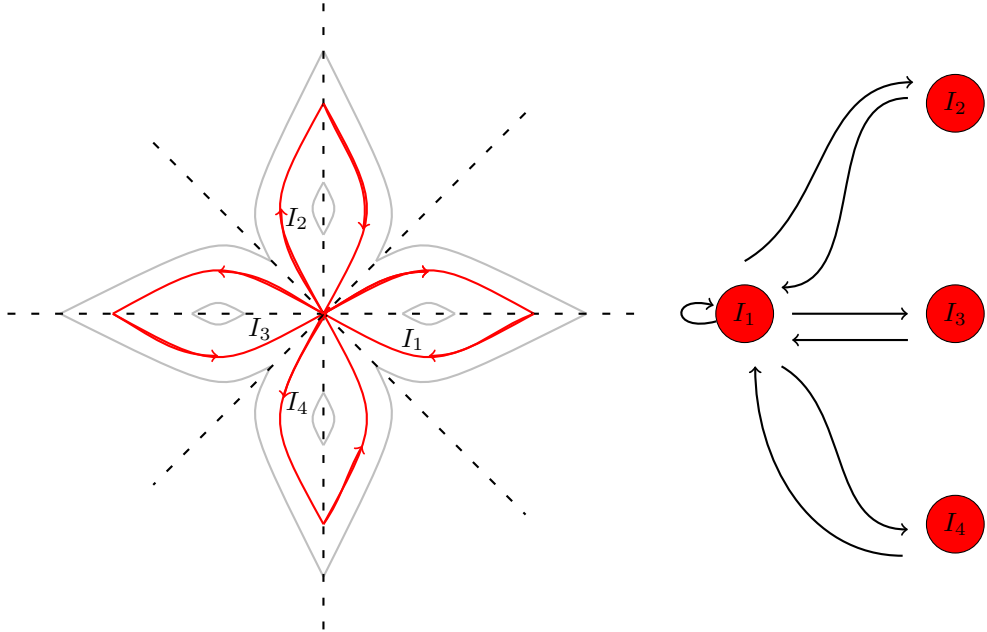

Note that this subset of trajectories is associated with an oriented graph (see Figure \ref{casok4} ), whose transition matrix is as follows:
%

$$A = \left[ \mathcal{L}^{\ast}_{{T}_1 \left|_{\overline{\Omega}_3} \right., 0} \right]_{4\times 4} =
 \begin{pmatrix}
	1 & 1 & 1 & 1 \\
	1 & 0 & 0 & 0 \\
	1 & 0 & 0 & 0 \\
	1 & 0 & 0 & 0 
	
\end{pmatrix}  .$$ 

For $A^2$ we have $a_{ij} > 0$. So, it follows that $A$ is an irreducible matrix  and in addition the eigenvalues of $A$ are $\lambda_1 = 0$ with multiplicity $2$, $\lambda_2 = \frac{1 + \sqrt{13}}{2}$ and $\lambda_3 = \frac{1 - \sqrt {13}}{2}$. Therefore $h_{top}(T_1\left|_{\widehat{\Omega}}\right.) =  \log\left( \rho(A) \right) = \log \left( \frac{1 + \sqrt{13}}{2}\right). $

\end{example}
	
	\begin{remark}
	When $A$ is defined by $1$ in the entire first row and in the entire first column, plus all other entries are null, the shift associated with $A$ is called \textbf{ golden mean shift}.
	\end{remark}

\begin{example}
Consider the case $k = 3$. Then $P_3 (x) = -x^6 + \frac{3x^4}{2} - \frac{9x^2}{15} + \frac{1}{16}$ with roots $\pm 1$ and $\pm \frac{1}{2}$, where the first three have multiplicity $2$ and the last ones are simple. Moreover the PSVF $Z_3$ is: 

\begin{equation}
 {Z_{3}} (x, y) =  \left\{ \begin{array}{cc} \left.\begin{array}{l} 
		X_{{+}_{3}} (x,y)   \; = \; \left( 1, -6x^5 + 6x^3 - \frac{9x}{8} + \frac{9x^2}{2} \right)  \; \textrm{for}  \; y \; \geq \; 0  \\
		X_{{-}_{3}} (x, y) \; = \; \left( -1, -6x^5 + 6x^3 - \frac{9x}{8} + \frac{9x^2}{2} \right)  \; \textrm{for}  \; y \; \leq \; 0.
		
	\end{array} \right.
\end{array}
\right.  
 \end{equation}

The points $p_1 = ( - \frac{1}{2}, 0)$, $p_2 = (\frac{1}{2}, 0)$   are visible-visible fold-folds and the other ones are crossing points  of $Z_3$.

The invariant region is the set: $$\Lambda_3 = \left\lbrace (x, P_3(x)) | - 1 \leq x \leq 1 \right\rbrace \cup  \left\lbrace (x, -P_3(x)) | - 1 \leq x \leq 1 \right\rbrace, $$ that is partitioned into the arcs
$$
I_0 =  \left\lbrace (x, P_3(x)) | - 1 \leq x < - \frac{1}{2} \right\rbrace \cup  \left\lbrace (x, -P_3(x)) | - 1 \leq x < - \frac{1}{2}  \right\rbrace$$ $$I_1 = \left\lbrace (x, P_3(x)) | - \frac{1}{2} \leq x <  \frac{1}{2} \right\rbrace $$ $$   I_2 = \left\lbrace (x, -P_3(x)) |- \frac{1}{2} \leq x <  \frac{1}{2} \right\rbrace $$  $$I_3 =  \left\lbrace (x, P_3(x)) |  \frac{1}{2} \leq x <  1  \right\rbrace \cup  \left\lbrace (x, -P_3(x)) | \frac{1}{2} \leq x <  1  \right\rbrace .
$$as shows Figure \ref{fig caso 3}. 
\end{example}

Now, $\Omega_3$ is the set of all trajectories contained in $\Lambda_3$ and $s : \Omega_3 \rightarrow\left\lbrace 0, 1, 2, 3 \right\rbrace^{\mathbb{Z}}$. If we take $\overline{\Omega}_3 = {\Omega_3}{/} \sim $ and the functions $\overline{s}$ and $\overline{T}_1$ as before, we have that $\overline{s}(\overline{\Omega}_3)$ is a subshift of $\left\lbrace 0, 1, 2, 3 \right\rbrace^{\mathbb{Z}}$ and $\overline{s}$ is a conjugation between $\overline{T}_1$ and the shift $\sigma$. In fact, it is easy to see that such subshift is
associated to the transition matrix:

$$ 	A = \left[ \mathcal{L}^{\ast}_{\overline{T}_1 \left|_{\overline{\Omega}_3} \right., - \beta \log \mid J_{med} \overline{T}_1 \mid}\right]_{4\times 4} =  \begin{pmatrix}
		p_1^{\beta} & 	(1 - p_1)^{\beta} & 0 & 0  \\
		0 & 0 & p_2^{\beta} & (1 - p_2)^{\beta}  \\
		(1 - p_2)^{\beta} &  p_2^{\beta} & 0 & 0  \\
		
		 0 & 0 & 	(1 - p_1)^{\beta} &	p_1^{\beta} 
		
	\end{pmatrix} $$and
$$P\left( \overline{T}_1 \left|_{\overline{\Omega}_3} \right., - \beta \log \mid J_{med} \overline{T}_1 \mid \right) = \log \left(\frac{(p_1^{\beta} + p_2^{\beta}) + \sqrt{(p_1^{\beta} - p_2^{\beta})^2 + 4(1 - p_1)^{\beta}(1 - p_2)^{\beta}}}{2} \right),  $$

where $p_1$ can be saw as the probability of $\phi_{Z}(x) \in I_0$ if $x \in I_0$ (or tha probability of $\phi_{Z}(x) \in$ $I_3$ if  $x \in I_3$); and $p_2$ can be saw as the probability of  $ \phi_{Z}(x) \in I_1$ if $x \in I_1$ (or the probability of  $ \phi_{Z}(x) \in I_2$, if $ x \in I_2$). 

When $\beta = 0$, the matrix $A$ is given by : 

$$ 	A = \left[ \mathcal{L}^{\ast}_{\overline{T}_1 \left|_{\overline{\Omega}_3} \right., 0} \right]_{4\times 4} =  \begin{pmatrix}
	1 & 1 & 0 & 0  \\
	0 & 0 & 1 & 1  \\
	1 &  1 & 0 & 0  \\
	0 & 0 & 1 &	1
	
\end{pmatrix} .$$

Since  $a_{ij} > 0$ for $A^2$, it follows that $A$ is an irreducible matrix  and in addition the eigenvalues of $A$ are $\lambda_1 = 0$ with multiplicity $3$, $\lambda_2 = 2$. Therefore $h_{top}(T_1\left|_{\overline{\Omega}_3}\right.) =  \log\left( \rho(A) \right) = \log \left( 2 \right). $




%
%


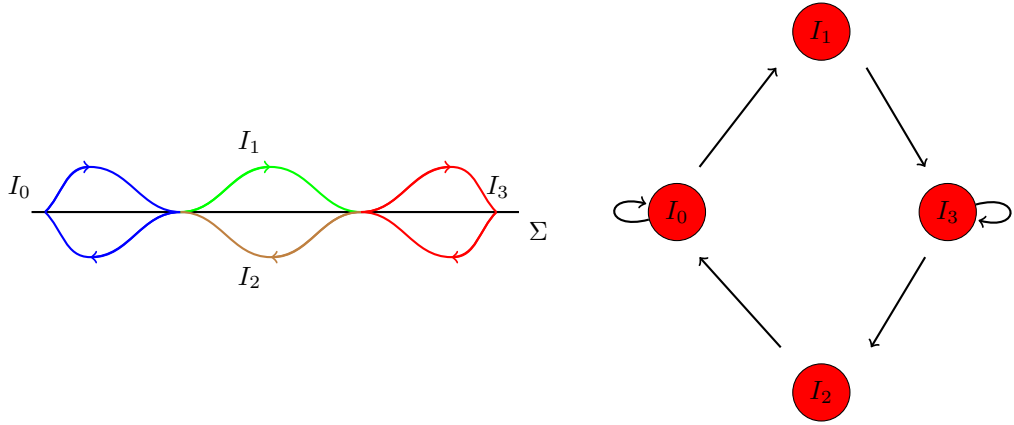
\begin{figure}
	\centering
	\begin{tikzpicture}[scale=0.6]
		\draw [thick, blue,->] (0,0) to [out=45,in=180]   (1,1);
		\draw [thick, blue,->] (3,0) to [out=180,in=0]   (1,-1);			
		\draw [thick, ->, green] (3,0) to [out=0,in=180] (5,1);
		\draw [thick, brown, ->] (7,0) to [out=180,in=0] (5,-1);
		\draw [thick, red, ->, rotate around={180:(5,0)}] (0,0) to [out=45,in=180] (1,1);
		\draw [thick, red,->] (7,0) to [out=0,in=180]   (9,1);	
		\draw[thick,black] (-0.3,0.0)--(10.5,0.0)node[below  right] {$ \Sigma$};
		\draw (-0.1,1) node[ below  left  ] {$I_0$};
		\draw (5.0,2) node[ below  left  ] {$I_1$};
		\draw (5.0,-1) node[ below  left  ] {$I_2$};
		\draw (10.5,1) node[ below  left  ] {$I_3$};
		\draw [thick, blue] (0,0) to [out=45,in=180] (1,1) to [out=0,in=180] (3,0);
		\draw [thick, blue, rotate around x=180] (0,0) to [out=-45,in=180] (1,1) to [out=0,in=180] (3,0);
		\draw [thick, green] (3,0) to [out=0,in=180] (5,1) to [out=0,in=180] (7,0);
		\draw [thick, brown, rotate around x=180] (3,0) to [out=0,in=180] (5,1) to [out=0,in=180] (7,0);				
		\draw [thick, red, rotate around={180:(5,0)}] (0,0) to [out=45,in=180] (1,1) to [out=0,in=180] (3,0);
		\draw [thick, red, rotate around={180:(5,0)}, rotate around x=180] (0,0) to [out=-45,in=180] (1,1) to [out=0,in=180] (3,0);
		\draw (14,0.0) node[circle, black, draw, fill=red](a0){$I_0$}
		(20, 0) node[circle, black, draw, fill=red](a2){$I_3$}
		(17.2, 4) node[circle, black, draw, fill=red ](a1){$I_1$}
		(17.2, -4) node[circle, black, draw, fill=red ](a3){$I_2$};
		
		\draw [->, thick, black] (14.5,1.0) -- (16.2,3.2);
		\draw [->, thick, black] (18.2,3.2) -- (19.5,1.0) ;
		\draw [->, thick, black] (19.5,-1.0) --  (18.3, -3.0);
		\draw [->, thick, black]  (16.3, -3.0) --  (14.5, -1.0);
		\path  (a0) edge [loop left, thick] (a0);
		\path  (a2) edge [loop right, thick] (a2);	
	\end{tikzpicture}
	\caption{Case $k=3$ and the related graph.}\label{fig caso 3}
\end{figure}

\section{Appendix}
\label{apen}
\subsection{Topological Pressure}
\label{press}
\begin{definition}
	Let $\mathfrak{A}$ and $\mathfrak{V}$ be open covers of $X$ compact and let $f : X \to X$ be a continuous transformation. We define their \textbf{join}, as the	collection of all sets of the form ${U} \cup  {V}$, where $U \in \mathfrak{A}$ and $V \in \mathfrak{V}$, and denoted by $ \mathfrak{A} \vee \mathfrak{V}$. Note that this join is a refinement of both covers. This allows us to construct refinements of a single open cover $\mathfrak{A}$. For each $n \in \mathbb{N}$, we define
	
	$$\mathfrak{U}^n := \overset{n-1}{\underset{i = 0}\bigvee}f^{-i}(\mathfrak{U}),$$
	
	where $f^{-i}(\mathfrak{A}) := \{f^{-i}({U}) : U \in \mathfrak{A} \} $
\end{definition}

Let $(X, d)$ be a compact metric space. We will call the elements of $\mathcal{C}(X)$ by \textit{"potentials"}. If $\phi \in \mathcal{C}(X)$, then we will talk about the topological pressure of the potential $\phi$ with respect to $f$. Let $n \in \mathbb{N}$. We denote $\phi^n (x) = \sum_{i = 0}^{n} \phi(f^i(x))$
to the $n$-th Birkoff sum evaluated at a point $x \in X$ for the potential $\phi$.

\begin{definition}
	Let $f$ be a continuous transformation on a compact metric space
	$(X, d)$. Let $ \varphi \in \mathcal{C}(X)$, $n \in \mathbb{N}$ and let $\mathfrak{A}$ be an open cover of $X$. We denote
	
	\begin{equation}
		P_n(f,\phi, \mathfrak{A}) := \inf\left\lbrace \underset {U \in \mathfrak{V}}{\sum} \underset{x \in U}{\sup} \, e^{\phi_n(x)} : \mathfrak{V} \; \textrm{a finite subcover of} \; \mathfrak{U}^{n} \right\rbrace 
	\end{equation}
\end{definition}

Since $\phi$ is bounded in $X$ by compactness, $P_n(f, \phi,  \mathfrak{A})$ is the infimum over a subset of bounded real numbers. Thus $P_n(f, \phi, \mathfrak{A}) < \infty$. We define \textbf{the pressure of the potential $\phi$ with respect to $f$ and the open cover $\mathfrak{A}$} by

\begin{equation}
	P(f, \phi, \mathfrak{A}) := \underset{n \to \infty}{\limsup} \frac{1}{n} \log 	P_n(\phi, f, \mathfrak{A})
\end{equation}

Similar to topological entropy, we want to calculate the pressure as we set the diameter of the cover $\mathfrak{A}$ to zero. The following lemma ensures that this limit exists and does not depend on the choice of covers (see \cite{walters}). 

\begin{lemma}
	Let $\left\lbrace \mathfrak{A}_k \right\rbrace_{k \in \mathbb{N}} $ be any sequence of open covers of $X$ such that 
	
	$$diam(\mathfrak{A}_k) \to 0, \; \textrm{when} \; k \to \infty.$$
	Then the limit $\underset{k \to \infty}{\lim} P(f, \phi, \mathfrak{A}_k)$ exists in $\mathbb{R} \cup \{\infty\}$ and does not depend on the choice of the sequence.
\end{lemma}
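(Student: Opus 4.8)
The plan is to show that the double sequence $P(f,\phi,\mathfrak{A}_k)$ stabilizes once the mesh of $\mathfrak{A}_k$ is small enough, using a Lebesgue-number argument, and then invoke the comparison lemma for pressure under refinement. First I would recall the standard monotonicity fact: if $\mathfrak{B}$ is a refinement of $\mathfrak{A}$ (every element of $\mathfrak{B}$ is contained in some element of $\mathfrak{A}$), then $P(f,\phi,\mathfrak{A}) \le P(f,\phi,\mathfrak{B})$, because any subcover extracted from $\mathfrak{B}^n$ gives, after replacing each member by a containing member of $\mathfrak{A}^n$, a subcover of $\mathfrak{A}^n$ whose weighted sum is no larger (the supremum of $e^{\phi_n}$ over the smaller set is at most that over the larger). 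Hence the quantity $P(f,\phi,\mathfrak{A})$ is monotone along refinements.

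Next I would use the uniform continuity of $\phi$ on the compact space $X$: given $\e>0$ there is $\delta>0$ so that $d(x,y)<\delta$ implies $|\phi(x)-\phi(y)|<\e$. Combined with equicontinuity-type control one gets that if $\operatorname{diam}(\mathfrak{A})$ is small, then for any finer cover $\mathfrak{B}$ one has $P(f,\phi,\mathfrak{B}) \le P(f,\phi,\mathfrak{A}) + C\e$ for a constant $C$ independent of $\mathfrak{B}$ (the Birkhoff sums $\phi_n$ over $n$ iterates contribute an error $n\e$, which is absorbed after dividing by $n$ and taking $\limsup$). The key technical input here is a Lebesgue number: for covers $\mathfrak{A}_k$ with $\operatorname{diam}(\mathfrak{A}_k)\to 0$, eventually each $\mathfrak{A}_k$ refines a fixed cover of small mesh, so the values $P(f,\phi,\mathfrak{A}_k)$ are eventually squeezed between $P(f,\phi,\mathfrak{A})$ and $P(f,\phi,\mathfrak{A}) + C\e$.

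From these two observations the conclusion follows: the net $\{P(f,\phi,\mathfrak{A})\}$ indexed by covers ordered by refinement is monotone and its oscillation over covers of mesh $<\delta(\e)$ is at most $C\e$; therefore it has a limit $L \in \mathbb{R}\cup\{\infty\}$, and any sequence $\mathfrak{A}_k$ with $\operatorname{diam}(\mathfrak{A}_k)\to 0$ is eventually of arbitrarily small mesh, so $P(f,\phi,\mathfrak{A}_k)\to L$. In particular the limit exists and is independent of the chosen sequence. I would remark that the case $L=\infty$ is handled uniformly by the same squeezing, interpreting the inequalities in $\mathbb{R}\cup\{\infty\}$.

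The main obstacle I anticipate is making the error estimate $P(f,\phi,\mathfrak{B}) \le P(f,\phi,\mathfrak{A}) + C\e$ fully rigorous at the level of the $n$-fold joins $\mathfrak{A}^n$ versus $\mathfrak{B}^n$: one must check that refining $\mathfrak{A}$ to $\mathfrak{B}$ (with small mesh) forces $\mathfrak{B}^n$ to refine $\mathfrak{A}^n$, and that along this refinement the supremum of $e^{\phi_n}$ over a $\mathfrak{B}^n$-cell exceeds that over the containing $\mathfrak{A}^n$-cell by at most a multiplicative factor $e^{n\e}$ — which is where uniform continuity of $\phi$ enters, but one must be careful that the modulus of continuity does not interact badly with the dynamics $f^{-i}$ (it does not, since we only compare $\phi$ values at the \emph{same} point lying in nested sets, not at points moved by $f$). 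Once this is pinned down, the rest is the routine squeeze argument; I would cite \cite{walters} for the detailed verification rather than reproduce it.
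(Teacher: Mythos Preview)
The paper does not prove this lemma; it simply records it and points to \cite{walters}. So there is no in-paper argument to compare against, and your closing remark that you would ``cite \cite{walters} for the detailed verification'' is, in effect, exactly what the paper does.

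That said, the sketch you give before deferring to Walters contains a genuine error. The ``standard monotonicity fact'' you invoke, $P(f,\phi,\mathfrak{A}) \le P(f,\phi,\mathfrak{B})$ whenever $\mathfrak{B}$ refines $\mathfrak{A}$, is not justified by your reasoning, and the finite-$n$ version $P_n(f,\phi,\mathfrak{A}) \le P_n(f,\phi,\mathfrak{B})$ that your argument would actually yield is false. Your own parenthetical (``the supremum of $e^{\phi_n}$ over the smaller set is at most that over the larger'') points in the \emph{opposite} direction: replacing each $V\in\mathfrak{B}^n$ by a containing $U\in\mathfrak{A}^n$ makes every summand \emph{larger}, and removing duplicates need not compensate. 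A concrete counterexample: $X=\{1,2,3\}$, $f=\mathrm{id}$, $\phi(2)=M$ large, $\phi(1)=\phi(3)=0$, $\mathfrak{A}=\{\{1,2\},\{2,3\}\}$, $\mathfrak{B}=\{\{1\},\{2\},\{3\}\}$. Then $\mathfrak{B}$ refines $\mathfrak{A}$, yet $P_1(f,\phi,\mathfrak{A})=2e^{M} > e^{M}+2 = P_1(f,\phi,\mathfrak{B})$. Monotonicity under refinement holds for topological entropy because there one merely counts cardinalities of subcovers; the weighted sums defining pressure do not behave this way.

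Since your squeeze rests on this monotonicity, the argument as written does not go through. The route in \cite{walters} avoids cover-versus-cover comparison entirely: one sandwiches $P_n(f,\phi,\mathfrak{A})$ between the spanning and separated quantities $G_n(f,\phi,\epsilon)$ and $S_n(f,\phi,\epsilon)$, using a Lebesgue number of $\mathfrak{A}$ on one side and $\operatorname{diam}(\mathfrak{A})$ on the other. The limit along any sequence with $\operatorname{diam}(\mathfrak{A}_k)\to 0$ is then forced to equal $G(f,\phi)=S(f,\phi)$, which exists and is sequence-independent. If you want a purely cover-based argument, you would need two-sided estimates involving the cardinality (or multiplicity) of the covers rather than refinement monotonicity.
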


\begin{definition}
	Let $f$ be a continuous transformation on a compact metric
	space $(X, d)$. Let $\phi \in \mathcal{C}(X)$ and $\{\mathcal{A}_k\}_{k \in \mathbb{N}}$ be a sequence of open covers such that$diam(\mathfrak{A}_k) \to 0, \; \textrm{when} \; k \to \infty.$. We define \textbf{the topological pressure of the potential $\phi$ with respect to} $f$ as
	
	\begin{equation}
		P(f, \phi) = \underset{k \to \infty}{\lim} P(f, \phi,  \mathfrak{A}_k).
	\end{equation}
\end{definition}

We now introduce the concept of pressure through sets separated by $(n, \epsilon)$ and spanning sets which will be very important throughout this paper

\begin{definition}
	Let $f : X \rightarrow X$ be a continuous map on the space $(X,d)$. A set $E \subset X$ is called $(n,\epsilon)-$separated for $f$ for $n$ a positive integer and $ \epsilon > 0$ provided that for every pair of distinct points $x, y \in E, \,  x \neq y,$ there is at least one $k$ with $0 \leq k < n$
	such that $d(f^k(x), f^k(y)) > \epsilon .$ Another way of expressing this concept is to introduce
	the distance $$d_{n, f}(x,y) = \underset {0 \leq j < n}{\sup}d(f^j(x), f^j(y)).$$
\end{definition}

Using this distance, a set $S \subset X$ is $(n,\epsilon)-$separated for f provided $d_{n,f} (x, y) > \epsilon$ for every pair of distinct points $x, y \in E, \, x \neq y.$

\begin{definition}
	Let $f : X \rightarrow X$ be a continuous map on the space $X$ with metric $d$. Let $K \subset X$ be a subset. For a positive integer $m$, let $$d_{m,f} (w, z) = \underset {0 \leq j < m}{\sup}d(f^j(w), f^j(z))$$ as we defined earlier. A set $E \subset K$ is said to $(n, \epsilon)-$spanning $K$ for $n$ a positive integer and $\epsilon > 0$ provided for each $x \in K$ there exists a $y \in E$ such that    $d_{n,f} (x, y) \leq \epsilon.$
\end{definition}

Thus, given $n \geq 1$ and $\epsilon > 0$, define

\begin{equation}
	G_n(f,\phi, \epsilon) = \inf\left\lbrace \underset {x \in E}{\sum} e^{\phi_n(x)} : \; E \; \textrm{is subset} \; (n, \epsilon)-\textrm{spanning} \; \textrm{of } \; X \right\rbrace \quad \textrm{and}
\end{equation}
\begin{equation}
	S_n(f,\phi, \epsilon) = \inf\left\lbrace \underset {x \in E}{\sum} e^{\phi_n(x)} : \; E \; \textrm{is subset} \; (n, \epsilon)-\textrm{separated} \; \textrm{of } \; X \right\rbrace.
\end{equation}

Then we can define
\begin{equation}
	G(f,\phi, \epsilon) =  \underset {n \to \infty}{\lim} \sup \frac{1}{n} \log G_n(f,\phi, \epsilon) \quad \quad \textrm{and}
\end{equation}
\begin{equation}
	S(f,\phi, \epsilon) =  \underset {n \to \infty}{\lim} \sup \frac{1}{n} \log S_n(f,\phi, \epsilon).
\end{equation}

Also, 
\begin{equation}
	G(f,\phi) =  \underset {\epsilon \to 0}{\lim}  G(f,\phi, \epsilon) \quad \quad \textrm{and}
\end{equation}
\begin{equation}
	S(f,\phi) =  \underset {\epsilon \to 0}{\lim} S(f,\phi, \epsilon).
\end{equation}

\begin{proposition}
	$P(f,\phi) = G(f,\phi) = S(f,\phi)$ for all potential $\phi$ in $X$.
\end{proposition}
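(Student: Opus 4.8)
The plan is to establish the two inequalities $P(f,\phi)\le S(f,\phi)$ and $S(f,\phi)\le G(f,\phi)\le P(f,\phi)$ (or a suitable cycle of inequalities among the three), following the classical argument of Walters, adapted to the pressure of a potential. First I would fix a continuous potential $\phi$ on the compact metric space $(X,d)$ and observe that by compactness $\phi$ is bounded and uniformly continuous; this uniform continuity is what lets one compare Birkhoff sums at nearby points along orbit segments, since for every $\eta>0$ there is $\delta>0$ with $d(x,y)<\delta\Rightarrow|\phi(x)-\phi(y)|<\eta$, hence $d_{n,f}(x,y)<\delta\Rightarrow|\phi_n(x)-\phi_n(y)|<n\eta$.

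The key steps, in order, are as follows. Step 1: show $S(f,\phi,\epsilon)\le G(f,\phi,\epsilon)$ for each $\epsilon>0$, because a maximal $(n,\epsilon)$-separated set is automatically $(n,\epsilon)$-spanning, so the infimum defining $G_n$ is over a family that includes such sets and one gets $G_n(f,\phi,\epsilon)\le S_n(f,\phi,\epsilon)$ after the standard comparison of the two sums (using that for the maximal separated set the spanning condition forces the weighted sums to be comparable up to the $e^{n\eta}$ distortion factor); letting $n\to\infty$ and then $\epsilon\to 0$ gives $S(f,\phi)\le G(f,\phi)$. Step 2: show $G(f,\phi,\epsilon)\le P(f,\phi,\mathfrak A)$ whenever the open cover $\mathfrak A$ has Lebesgue number larger than $\epsilon$: given a finite subcover $\mathfrak V$ of $\mathfrak U^n$ realizing $P_n(f,\phi,\mathfrak A)$ up to a small error, pick one point from each element of $\mathfrak V$; the diameters of the elements of $\mathfrak U^n$ in the $d_{n,f}$ metric are controlled by the Lebesgue number, so this finite set is $(n,\epsilon)$-spanning and $\sum_{x\in E}e^{\phi_n(x)}\le \sum_{U\in\mathfrak V}\sup_{x\in U}e^{\phi_n(x)}$ up to distortion, giving $G_n(f,\phi,\epsilon)\le P_n(f,\phi,\mathfrak A)\cdot e^{n\eta}$; taking limits yields $G(f,\phi)\le P(f,\phi)$. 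Step 3: show $P(f,\phi,\mathfrak A)\le S(f,\phi,\epsilon)$ when $\epsilon$ is small relative to the diameters in $\mathfrak A$ (so that any $(n,\epsilon)$-ball is contained in an element of $\mathfrak U^n$): from a maximal $(n,\epsilon)$-separated set $E$ one builds a subcover of $\mathfrak U^n$ by assigning to each point of $E$ an element of $\mathfrak U^n$ containing its $d_{n,f}$-ball, and then $P_n(f,\phi,\mathfrak A)\le \sum_{x\in E}e^{\phi_n(x)}\cdot e^{n\eta}$ up to distortion; limits give $P(f,\phi)\le S(f,\phi)$. Chaining Steps 1--3 closes the loop $P\le S\le G\le P$, hence all three are equal.

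The main obstacle I expect is the bookkeeping in Steps 2 and 3 where one passes between open covers and $(n,\epsilon)$-dynamical balls: one must use a Lebesgue-number argument for one direction and a diameter-control argument for the other, and carefully insert the uniform-continuity distortion factor $e^{n\eta}$ at the level of the finite-$n$ quantities so that, after taking $\frac1n\log$, letting $n\to\infty$, and finally letting $\epsilon\to 0$ (equivalently $\mathrm{diam}(\mathfrak A_k)\to0$) and $\eta\to0$, the extra terms disappear. A secondary subtlety is the interplay of $\limsup$'s: since all of $P$, $G$, $S$ are defined via $\limsup_n\frac1n\log(\cdot)$ and then a monotone limit in $\epsilon$, one should note that the relevant quantities are monotone in $\epsilon$ so the outer limits exist in $\mathbb R\cup\{\infty\}$, and that the chain of inequalities at finite scale survives the $\limsup$. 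Everything else is a routine adaptation of the entropy proof, with $\phi\equiv 0$ recovering $P(f,0)=G(f,0)=S(f,0)=h_{top}(f)$.
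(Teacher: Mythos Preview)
The paper does not actually prove this proposition: its entire proof is the single line ``See \cite{viana}.'' Your sketch is precisely the classical argument one finds in Viana--Oliveira (or Walters, Chapter 9), so in that sense you are doing exactly what the paper defers to.

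That said, Step 1 has its directions tangled, and as written the loop does not close. The observation that a maximal $(n,\epsilon)$-separated set is $(n,\epsilon)$-spanning gives, directly and without any distortion factor, $G_n(f,\phi,\epsilon)\le S_n(f,\phi,\epsilon)$ (here $S_n$ must be a \emph{supremum} over separated sets; the ``$\inf$'' in the paper's display is a typo), hence $G\le S$ --- the opposite of what you need for the chain $P\le S\le G\le P$. The inequality $S\le G$ is the one that genuinely requires the $e^{n\eta}$ distortion: given an $(n,\epsilon)$-separated set $F$ and an $(n,\epsilon/2)$-spanning set $E$, assign to each $y\in F$ some $x(y)\in E$ with $d_{n,f}(y,x(y))\le\epsilon/2$; this map is injective by the separation of $F$, so $\sum_{y\in F}e^{\phi_n(y)}\le e^{n\eta}\sum_{x\in E}e^{\phi_n(x)}$ and $S_n(f,\phi,\epsilon)\le e^{n\eta}G_n(f,\phi,\epsilon/2)$. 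With that correction (and the analogous care about which of Lebesgue-number versus diameter controls which direction in Steps 2--3), the cycle closes and the argument goes through.
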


\begin{proof}
	See \cite{viana}. 
\end{proof}

Finally, we present some of the properties that the topological pressure satisfies and that will be necessary for the main results (see \cite{walters}). 

\begin{proposition}
	Let $f$ be a continuous transformation on a compact metric space $(X, d$). Let $\phi \in \mathcal{C}(X)$. Then:
	\begin{enumerate}
		\item[$(i)$] $P(0, f) = h_{top}(f);$
		\item[$(i)$] Let $(X_1, d_1)$ and $(X_2, d_2)$ compact metric spaces with continuous maps $f_i: X_i \to X_i$ for $i = 1, 2.$ If $h : X_1 \to X_2$ is a surjective continuous map with
		$h \circ f_1 = f_2 \circ h.$ Then for every $\phi \in \mathcal{C}(X_2)$ we have $P(\phi \circ h, f_1) \geq P(\phi, f_2).$ The equality holds if $h$ is a homeomorphism.	
	\end{enumerate}
	
\end{proposition}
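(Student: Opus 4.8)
The plan is to prove the two listed properties of topological pressure, exactly as stated in the final Proposition, by unpacking the variational/spanning-set definitions laid out just above in the Appendix. For item $(i)$, $P(f,0)=h_{top}(f)$, I would simply substitute the zero potential $\phi\equiv 0$ into the spanning-set (or separated-set) formula: then $\phi_n(x)=\sum_{i=0}^{n}0=0$, so $e^{\phi_n(x)}=1$ for every $x$, and hence $G_n(f,0,\e)=\inf\{\#E: E\text{ is }(n,\e)\text{-spanning}\}$, which is precisely the quantity whose exponential growth rate defines $h_{top}(f)$. Taking $\frac1n\log$, then $\limsup_{n\to\infty}$, then $\lim_{\e\to 0}$ reproduces Bowen's definition of topological entropy verbatim; invoking the Proposition $P(f,\phi)=G(f,\phi)=S(f,\phi)$ closes this part. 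This step is essentially a definition-chase and carries no real difficulty.

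For item $(ii)$, the semiconjugacy inequality, the plan is to use the spanning-set characterization on the factor $X_2$ and pull spanning sets back through $h$. Fix $\phi\in\mathcal C(X_2)$ and $\e>0$. Since $h$ is continuous on the compact space $X_1$, it is uniformly continuous, so there is $\de>0$ with $d_1(x,y)\le\de\implies d_2(h(x),h(y))\le\e$. Using $h\circ f_1=f_2\circ h$ one gets $d_{n,f_2}(h(x),h(y))\le\e$ whenever $d_{n,f_1}(x,y)\le\de$; therefore if $E\subset X_1$ is $(n,\de)$-spanning for $f_1$, then $h(E)$ is $(n,\e)$-spanning for $f_2$. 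Moreover $\phi_n\big(h(x)\big)=\sum_{i=0}^n\phi(f_2^i(h(x)))=\sum_{i=0}^n\phi(h(f_1^i(x)))=(\phi\circ h)_n(x)$, so the weighted sums match up: $\sum_{y\in h(E)}e^{\phi_n(y)}\le\sum_{x\in E}e^{(\phi\circ h)_n(x)}$ (using surjectivity of $h$ onto its image and that $h$ restricted to $E$ might collapse points, which only decreases the left sum). Taking infima gives $G_n(f_2,\phi,\e)\le G_n(f_1,\phi\circ h,\de)$; passing to $\frac1n\log$, $\limsup_n$, and $\e\to0$ (with $\de=\de(\e)\to0$) yields $G(f_2,\phi)\le G(f_1,\phi\circ h)$, i.e. $P(f_2,\phi)\le P(f_1,\phi\circ h)$, again via the Proposition identifying $P$ with $G$.

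For the equality claim when $h$ is a homeomorphism, I would run the identical argument with the roles of $X_1,X_2$ reversed, applying it to the potential $\phi\circ h\in\mathcal C(X_1)$ and the conjugacy $h^{-1}$: this gives $P(f_1,\phi\circ h)\le P(f_2,(\phi\circ h)\circ h^{-1})=P(f_2,\phi)$, and combined with the inequality already proven this forces equality. The only point requiring care — and the mildest obstacle — is the bookkeeping in the direction of the inequality: one must be sure that pushing a spanning set forward (rather than pulling one back) is the move that works, because a homeomorphism-free $h$ need not send separated sets to separated sets nor reflect spanning sets, so the natural estimate only goes one way. Once the spanning-set direction is fixed the rest is routine, and I would keep the exposition at the level of these definition manipulations, citing \cite{walters} and \cite{viana} for the background facts already quoted in the Appendix.
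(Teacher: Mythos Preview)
Your argument is correct and is, in fact, the standard proof found in Walters. The paper itself does not give a proof of this proposition at all: it simply refers the reader to \cite{walters} in the sentence preceding the statement, so you have supplied strictly more than the paper does. One small cosmetic point: in the spanning-set inequality you do not actually need $\delta(\e)\to 0$; it suffices that $G(f_1,\phi\circ h,\delta)\le G(f_1,\phi\circ h)$ by monotonicity of $G$ in its last argument, after which you let $\e\to 0$ on the left-hand side only.
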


\subsection{The generalized tent map and Shifts}
\label{tent}

Let $\mathcal{A}_k$ be a finite set with at least two elements. Its elements will be called symbols and $\mathcal{A}_k$ will be called the \textit{symbol set} or \textit{alphabet}. The number of elements of $\mathcal{A}_k$ will be denoted by $k$. For convenience, we identify in this subsection $\mathcal{A}_k$ with an initial segment of $\mathbb{Z}^{+}: \mathcal{A}_k = \{ 0, 1, \cdots , k- 1 \}$.

The sequence space $ \mathcal{A}_k^{\mathbb{Z}^{+}}$ is the space of all infinite sequences $x = (x_n)_{n \in \mathbb{Z}^{+}}$ with $x_n \in \mathcal{A}_k$ for all $n \in \mathbb{Z}^{+}.$

The elements of $ \mathcal{A}_k^{\mathbb{Z}^{+}}$ will be denoted without commas and parentheses, as follows: $$x = x_0 x_1 x_2 \cdots x_n \cdots$$ with $x_n \in  \mathcal{A}_k $ for all $n \in \mathbb{Z}^{+}.$ Properly speaking, an element $x \in\mathcal{A}_k^{\mathbb{Z}^{+}}$  is a function $x : \mathbb{Z}^{+} \rightarrow  \mathcal{A}_k;$  its value at an element $n \in \mathbb{Z}^{+}$ is denoted by $x_n$ rather than by the more customary $x(n)$. We call $x_n$ the $n$-th coordinate of $x$ and we also say that the symbol $x_n$ has position $n$ in $x$ or that it occurs at position $n$ The mapping $\pi_n :\mathcal{A}_k^{\mathbb{Z}^{+}} \rightarrow \mathcal{A}_k$ that assigns to every point of $\mathcal{A}_k^{\mathbb{Z}^{+}}$ its $n$-th coordinate will be called the $n$-th projection $(n \in \mathbb{Z}^{+})$. Then $$ x  = \pi_0(x) \pi_1(x) \pi_2(x) \cdots \pi_n(x) \cdots$$ for every $x \in \mathcal{A}_k^{\mathbb{Z}^{+}}$. So if $ \alpha \in \mathcal{A}_k$ and $n \in \mathbb{Z}^{+}$ then $x \in \pi_n^{\leftarrow}[\alpha]$ iff $\alpha$ occurs at position $n$ in $x$,iff $x_n = \alpha$.

A \textit{word} or \textit{block} over $\mathcal{A}_k$ is an element of ${\mathcal{A}_k}^r$ for some $r \in \mathbb{N}$, that is, a finite ordered sequence of elements of $\mathcal{A}_k$, as follows:
$$ b := b_0 \cdots b_{r-1} \in {\mathcal{A}_k}^r = \underbrace{\mathcal{A}_k \times \cdots \times \mathcal{A}_k}_{r \;times}.$$

In this case, the natural number $r$ is called the length of the block $b$. The length of a block $b$ will be denoted by $\mid b \mid$. A block with length $r$ will also be called a $r$-block. There is a unique block with length $0$, the unique member of the set ${\mathcal{A}_k}^0$. It will be called \textit{empty block} or the \textit{empty word} and is denoted by $\boxtimes$. The set of all blocks over $\mathcal{A}_k$ will be denoted by ${\mathcal{A}_k}^{\ast} .$  Stated otherwise, ${\mathcal{A}_k}^{\ast} := \displaystyle\bigcup_{r \in \mathbb{Z}^{+}} {\mathcal{A}_k}^r .$

For blocks we use the notions of "coordinate" and "position" just as for sequences. Thus, if $b = b_0 \cdots b_{\mid b \mid - 1} \in {\mathcal{A}_k}^{\ast}$  then for $n = 0, \cdots , \mid b \mid - 1$ the symbol $b_n$ will be called the $n$-th coordinate of $b$ and we shall say that it is the symbol at position $n$ in $b$.

If $x \in \mathcal{A}_k^{\mathbb{Z}^{+}}$ and $j, l \in \mathbb{Z}^{+}, 0 \leq j < l$, then the block $x_j\cdots x_l$ will be denoted by $x_{[j ; l]}$ and also by $x_{[j ; l+1)}$. The latter notation is particularly convenient for a block with its last position at $l - 1$ for some $l > j: x_{[j ; l)} = x_{[j ; l-1]}.$

New words can be formed by concatenation of two or more words: if $b$ and $c$ are finite words and $b$ or $c$ is empty then $bc = c$ or $b$, respectively; if neither $b$ nor $c$ is empty then $bc := b_0 \cdots b_{\mid b \mid - 1}c_0 \cdots c_{\mid c \mid - 1}$, i.e., it is the word with length $\mid b \mid + \mid c \mid$ whose $n$-th coordinate is
\begin{equation*}
	\label{bloc}
	(bc)_n := \left\{ \begin{array}{c}
		b_n \; \, \, if \,\, 0 \leq n \leq \mid b \mid - 1 \\
		c_{n - \mid b \mid} \, \, if \,\, \mid b \mid \leq n \leq \mid b \mid + \mid c \mid - 1
	\end{array}  
	\right.
\end{equation*}  

Concatenation is easily seen to be associative: if $b$, $c$ and $d$ are words then $(bc)d =
b(cd)$; we shall denote this block by $bcd$. Consequently, concatenation of an arbitrary
finite number of words can be defined and can unambiguously be written without
parentheses.

Let $b$ be a non-empty block. The set of all points of $\mathcal{A}_k^{\mathbb{Z}^{+}}$ that contain the block $b$ in initial position will be called the cylinder on $b$ and it will be denoted by $C_0[b]$. Thus,
\begin{equation*}
	C_0[b] := \left\lbrace x \in \mathcal{A}_k^{\mathbb{Z}^{+}} ; x_{[0; \mid b \mid)} = b \right\rbrace
\end{equation*}
$$= \left\lbrace x \in \mathcal{A}_k^{\mathbb{Z}^{+}}; x_n > b_n \; \textrm{for} \; n = 0, \cdots, \mid b \mid -1 \right\rbrace$$
$$= \left\lbrace \pi_n^{\leftarrow} [b_n] \; n = 0, \cdots, \mid b \mid -1  \right\rbrace.  $$

\begin{remark}
	Notation: $f^{\leftarrow} [A] := \left\lbrace x \in X : f(x) \in A \right\rbrace. $
\end{remark}

We reserve a special notation for cylinders that are based on an initial block of a point
$x \in  \mathcal{A}_k^{\mathbb{Z}^{+}}$, i.e., cylinders of the form $C_0[x_{[0 ; r)}]$ with $r \in \mathbb{N} :$ 
\begin{equation*}
	\widetilde{B}_r(x) := C_0[x_{[0 ; r)}] = \left\lbrace y \in \mathcal{A}_k^{\mathbb{Z}^{+}}; y_n = x_n \; \textrm{for} \: 0 \leq n \leq r-1 \right\rbrace .
\end{equation*}

In this section $\left( X,f \right) $ will always denote an arbitrary dynamical system.
Let $\mathcal{P} := \left\lbrace  P_0, \cdots , P_{k - 1} \right\rbrace $ be a partition of $X$ into $k$ pieces $(k \in \mathbb{N} ).$ We can get an idea of the orbit of a point of $X$ by considering the sequence of pieces that are successively visited. If $x \in X$ then for every $n \in   \mathbb{Z}^{+}$ the point $f^n(x)$ is situated in a unique member $P_{z_n}$ of $P$. In this way we get a sequence $z = z_ 0 z_1 z_2 z_3 \cdots$ of elements from $\mathcal{A}_k := \{ 0, \cdots , k - 1 \}$, i.e., an element of $ \mathcal{A}_k^{\mathbb{Z}^{+}}$. It is called the itinerary of the point $x$ and denoted by $\iota(x)$. So by definition,
\begin{equation}
	\forall \; n \; \in \; \mathbb{Z}^{+} \; : \;  f^n(x) \in P_{\iota(x)_n} .
	\label{itine}
\end{equation}

Also in the case that the sets $P_i$ do not cover $X$ but are still disjoint it is possible that certain
elements of $X$ have an itinerary according to (\ref{itine}) a point $x \in X$ has an itinerary
iff $f^n(x) \in P_0 \cup \cdots \cup P_{k-1}$ for every $n \in \mathbb{Z}^{+}$, iff $x$ belongs to the set
\begin{equation}
	X^{\ast} \left(\mathcal{P}, f \right) := \displaystyle\bigcap_{n \in \mathbb{Z}^{+}} \left( f^n \right)^{\leftarrow} \left[P_0 \cup \cdots \cup P_{k-1} \right]   .
	\label{xast}
\end{equation}

If this set is empty then no point of $X$ has an itinerary. If $x \in X^{\ast} \left(\mathcal{P}, f \right)$ then $f^n(f(x)) = f^{n+1}(x) \in P_{\iota(x)_{n+1}}$ for every $n \in \mathbb{Z}^{+}$, which clearly implies that $f(x) \in X^{\ast} \left(\mathcal{P}, f \right)$ and that the itinerary of $f(x)$ is obtained from the itinerary of $x$ by applying the shift operator to it. So the set $X^{\ast} \left(\mathcal{P}, f \right)$ is invariant under $f$ and on this set the equality $\sigma_k \circ \iota = \iota \circ f$ holds. In particular, it follows that the set of all itineraries of points of $X$ – a subset of $\mathcal{A}_k^{\mathbb{Z}^{+}}$ – is invariant under $\sigma .$

\begin{definition}
	\label{markocpart}
	A \textbf{topological partition} of $X$ is a finite family $\mathcal{P} = \left\lbrace P_0, \cdots ,P_{k-1} \right\rbrace $ of mutually disjoint non-empty open subsets of $X$ whose closures cover $X$:
	
	$$P_i \cap P_j = \emptyset \quad \textrm{for} \quad i \neq j \; \; (i,j = 0,1, \cdots, k-1) \; \; \textrm{and}$$
	$$X = \overline{P_0} \cup \cdots \cup  \overline{P_{k-1}} =  \overline{P_0 \cup \cdots  P_{k-1}} .$$
\end{definition}

So a topological partition has in common with a genuine partition that it consists of
mutually disjoint sets. However, the union of these sets may not be equal to $X$, but it
has to be dense in $X$.

Let $P = \left\lbrace P_0 , \cdots , P_{k - 1}\right\rbrace $ be a topological partition of $X$ and, in accordance with (\ref{itine}) above, consider $ \mathcal{A}_k = \{ 0, \cdots , k - 1 \}$. In addition, let
\begin{equation} 
	\label{dens}
	U_{\mathcal{P}} := P_0 \cup \cdots \cup P_{k-1} .
\end{equation}
Then $U_{\mathcal{P}}$  is a dense open subset of $X$.

An itinerary as defined in (\ref{itine}) with respect to $\mathcal{P}$ will be called a full itinerary. Recapitulating,a point $x \in X$ has a full itinerary $\iota(x) \in \mathcal{A}_k^{\mathbb{Z}^{+}}$ whenever $f^n(x) \in U_{\mathcal{P}}$ for every $n \in \mathbb{Z}^{+}$, in which case $\iota(x)$ is characterized by condition (\ref{itine}). As in (\ref{itine}), the set of points having a full itinerary is denoted by $X^{\ast} \left(\mathcal{P}, f \right)$, so $X^{\ast} \left(\mathcal{P}, f \right) = \displaystyle\bigcap_{n = 0}^{\infty} \left( f^n \right)^{\leftarrow} \left[U_{\mathcal{P}}\right]$ (see also formula (\ref{xast})). Note that $X^{\ast} \left(\mathcal{P}, f \right)$ is the intersection of countably many open sets: a $G_{\delta}$-set. When $P$ and $f$ are understood the set $X^{\ast} \left(\mathcal{P}, f \right)$ will be denoted simply by $X^{\ast}$. If $x \in X^{\ast}$ then its full itinerary $\iota(x)$ is the element of $\mathcal{A}_k^{\mathbb{Z}^{+}}$ characterized by formula (\ref{itine}), which can be rewritten as 
\begin{equation}
	\label{iti}
	x \in \displaystyle\bigcap_{n = 0}^{\infty} \left( f^n \right)^{\leftarrow} \left[P_{\iota(x)_n}\right].
\end{equation}

Apart from full itineraries we shall also consider partial itineraries. A finite block
$b = b_0 \cdots b_{r-1} $ over  $\mathcal{A}_k^{\mathbb{Z}^{+}}$ is said to be a partial itinerary of a point $x \in X$ whenever $f^n(x) \in P_{b_n}$ for $0 \leq n \leq r - 1.$ Thus, if for every $r \in \mathbb{N}$ and every block $b$ of length $r$ we define
\begin{equation*}
	\label{itio}
	D_r(b) := \displaystyle\bigcap_{n = 0}^{r-1} \left( f^n \right)^{\leftarrow} \left[ P_{b_n} \right].
\end{equation*}

then the block $b$ is a partial itinerary of the point $x \in X$ iff $x \in D_r(b)$. In that case, the
set $D_r(b)$ is an open neighbourhood of the point x.

One more notational convention: if $z \in \mathcal{A}_k^{\mathbb{Z}^{+}}$ and $r \in \mathbb{N}$ then the clumsy expression $D_r(z_{[0 ; r)})$ will be simplified to $D_r(z)$; so
\begin{equation*}
	\label{itiou}
	D_r(z) := D_r(z_{[0 ; r}) = \displaystyle\bigcap_{n = 0}^{r-1} \left( f^n \right)^{\leftarrow} \left[ P_{z_n} \right].
\end{equation*}

So, if $x \in X^{\ast}$ then every initial block of the full itinerary $\iota(x)$ is a partial itinerary of $x$. In fact, formula (\ref{iti}) implies that for all $r \in \mathbb{N} $ we get
\begin{equation}
	x \in  \displaystyle\bigcap_{n = 0}^{\infty} \left( f^n \right)^{\leftarrow} \left[P_{\iota(x)_n}\right] \subseteq 
\displaystyle\bigcap_{n = 0}^{r-1} \left( f^n \right)^{\leftarrow} \left[P_{\iota(x)_n}\right] = D_r(\iota(x)) .\end{equation}

For any topological partition $P = \left\lbrace P_0, \cdots , P_{r-1} \right\rbrace$  of $X$ we define in the following way a shift space over the symbol set $\mathcal{A}_k$. Call a block $b$ over $\mathcal{A}_k$ of length $r \geq 1$ forbidden with respect to the pair $(\mathcal{P}, f)$, or just $(\mathcal{P}, f)$-forbidden, whenever $D_r(b) = \emptyset.$ The set of $(\mathcal{P}, f)$-forbidden blocks will be denoted by $\mathcal{B}_{\mathcal{P}, f}$. In accordance with $5.3.5$ of \cite{vries}, the elements
of the set $\mathcal{A}_k^{\ast} \setminus \mathcal{B}_{\mathcal{P}, f}$ are called the $(\mathcal{P}, f)$-allowed words or blocks. Thus, a finite block $b$ is $(\mathcal{P}, f)$-allowed iff $D_r(b)  \neq \emptyset$, iff there is a point $x \in X$ such that $f^n(x) \in P_{b_n}$ for $0 \leq n \leq r - 1$, iff the block $b$ is a partial itinerary of some point $x$ of $X$.

From Section $5.3$ of \cite{vries}, the set $\mathcal{B}_{\mathcal{P}, f}$ defines a subset $ \mathfrak{X}(\mathcal{B}_{\mathcal{P}, f})$ of $\mathcal{A}_k^{\mathbb{Z}^{+}}$, which is a shift space if it is not empty. If this is the case we say that the topological partition $\mathcal{P}$ is $f$-adapted and we call this shift space the symbolic model of $(X,f)$, generated by $\mathcal{P}$. It will be denoted by $\mathcal{Z}(\mathcal{P}, f).$

It follows from the definitions – see also formulas $(5.3-6)$ of \cite{vries} – that a point $z$ of $\mathcal{A}_k^{\mathbb{Z}^{+}}$ belongs to $\mathcal{Z}(\mathcal{P}, f)$ iff every block occurring in $z$ is $(\mathcal{P}, f)$-allowed, iff every block occurring in $z$ is a partial itinerary, iff every initial block of $z$ is a partial itinerary, iff $D_r(z) \neq 0$ for every $r \in \mathbb{N}.$

From now on we assume that  $(X,f)$ is a dynamical system with a \textit{compact Hausdorff} phase space $X$. If $\mathcal{P} = \left\lbrace P_0, P_1, \cdots, P_{k-1}\right\rbrace $ is an $f$-adapted topological partition of $X$ then the shift space $\mathcal{Z}(\mathcal{P}, f)$ exists. Recall that, by definition, for every point $z \in \mathcal{Z}(\mathcal{P}, f)$ and for every $r \in \mathbb{N}$ the initial block $z[0 ; r)$ of $z$ is $(\mathcal{P}, f)$-allowed, which means that the set $D_r(z)$ is non-empty. Consequently, for every point $z$ of $\mathcal{Z}(\mathcal{P}, f)$ the sets $D_r(z)$ for $r = 1, 2, 3, \cdots$ form a decreasing sequence of non-empty closed sets in the compact space $X$, hence they have a non-empty intersection.

The topological partition $\mathcal{P}$ of $X$ is called a \textit{pseudo-Markov} partition whenever it
is $f$-adapted and for every point $z \in \mathcal{Z}(\mathcal{P}, f)$ the set  $\displaystyle\bigcap_{r = 1}^{\infty}D_r(z)$ consists of just one point. If the shift space $\mathcal{Z}(\mathcal{P}, f)$ is, in addition, of finite type then the topological partition $\mathcal{P}$ is called a \textit{Markov partition}.

If $\mathcal{P}$ is a pseudo-Markov partition then for every $z \in \mathcal{Z}(\mathcal{P}, f)$ the unique point of the intersection $\displaystyle\bigcap_{r = 1}^{\infty}D_r(z)$ will be denoted by $\Psi_{\mathcal{P}, f}(z)$. In this way we obtain a mapping $\Psi_{\mathcal{P}, f} :  \mathcal{Z}(\mathcal{P}, f)\rightarrow X$. So by definition, we have 
$$ \forall  \; z \in  \mathcal{Z}(\mathcal{P}, f) : \{ \Psi_{\mathcal{P}, f} \} = \displaystyle\bigcap_{r = 1}^{\infty} \overline{D_r(z)} .$$

Next, we will denote the displacement space $\mathcal{Z}(\mathcal{P}, f)$ only by $\mathcal{Z}$ and the mapping $\Psi_{\mathcal{P}, f} $ by $\Psi$. The Proposition $6.1.8$ of \cite{vries}, shows that 
$\Psi$ is a dynamical systems morphism, so the subset $\Psi[\mathcal{Z}]$ of $X$ is closed ($\mathcal{Z}$ is compact) and invariant, defining a subsystem of $(X, f)$. If $\Psi$ is a surjection we call the morphism $ \Psi : (\mathcal{Z}, \sigma_{\mathcal{Z}}) \rightarrow (X ,f)$ a symbolic representation of the dynamical system $(X ,f)$. Also in the case that $\Psi$ is not surjective, or that we do not yet know it to be surjective, we (sloppily) call the system $(\mathcal{Z}, \sigma_{\mathcal{Z}})$ a symbolic representation of $(X, f).$

\subsection{PSVFs and Shifts}
\label{psvf}

Here we will  identify $\mathcal{A}_k = \{ 0, 1, \cdots , k- 1 \}$.

Given a flow,  $\varphi (t, x)$ of a vector field $W$ defined in an open set $\mathcal{U}$, define $T_1 : \mathcal{U} \rightarrow \mathcal{U}$ the time-one map given by $T_1(x) = \varphi(1, x) $. 

Consider the following PSVFs: 
\begin{equation*}
	{Z_{2}} (x, y) =  \left\{ \begin{array}{cc} \left.\begin{array}{l} 
			X_{2_{+}} (x,y)   \; = \; \left( 1, \frac{x}{2} - 4x^3 \right)  \; \textrm{for}  \; y \; \geq \; 0  \\
			X_{2_{-}} (x, y) \; = \; \left( -1, \frac{x}{2} - 4x^3 \right)  \; \textrm{for}  \; y \; \leq \; 0 
			
		\end{array} \right.
	\end{array}
	\right.
\end{equation*}	

\begin{equation*}
 {Z_{k}} (x, y) =  \left\{ \begin{array}{cc} \left.\begin{array}{l} 
		X_{{+}_{k}} (x,y)   \; = \; \left( 1, P^{'}_k (x) \right)  \; \textrm{for}  \; y \; \geq \; 0  \\
		X_{{-}_{k}} (x, y) \; = \; \left( -1, P^{'}_k (x) \right)  \; \textrm{for}  \; y \; \leq \; 0 
		
	\end{array} \right.
\end{array}
\right. 
\end{equation*}
   where 
$$P_k(x) = - \left( x + \frac{k-1}{2}\right) \left( x -  \frac{k-1}{2}\right) \prod_{i=1}^{k-1} \left(x - \left( i - \frac{k}{2}\right)  \right)^2 . $$ 

Notice that $P_k$ has $2k$ roots, being $2$ simple roots at $r_0 = \frac{1-k}{2}$ and $r_1 = \frac{k-1}
{2}$ and $k-1$ roots of multiplicity two at $p_j = j- \frac{k}{2}$ for $j = 1, \cdots , k -1.$ Moreover, $P^{'}_k (r_0) > 0$, $P^{'}_k (r_1) < 0$, $P^{'}_k (p_j) = 0$,  and $P^{''}_k (p_j) > 0$, for every $j = 1, \cdots , k-1.$  In addition, $(r_0, 0)$ and $(r_1, 0)$ are crossing points of $Z_k$, the points $(p_j, 0)$ are visible-visible two folds of $Z_k$,  $j = 1, 2, \cdots k-1 .$ These vector fields were stated in  \cite{andre1}. 

For each $k < \infty$, consider $$\gamma^{X}_k = \left\lbrace (x, P_k (x)) \mid x \in [r_0, r_1] \right\rbrace \; \textrm{and} \; \gamma^{Y}_k \left\lbrace (x, - P_k (x)) \mid x \in [r_0, r_1] \right\rbrace .$$ Define $\Lambda_k = \gamma^{X}_k \cup \gamma^{Y}_k, $ and  note that $\Lambda_k $  is an invariant set for $Z_k .$

Remember that a PSVF does not have a unique trajectory passing through a point.  Thus the map $T_1 : \Lambda_k \rightarrow \Lambda_k$, $T_1 (x) = \varphi (1, x)$, where $\varphi$ is a flow passing through $x \in \Lambda_k$. 
In order to avoid such a problem we will redefine the function $T_1$, but some new definitions and results must to be considered.

Take the set  $$\Omega_k = \left\lbrace \gamma \; | \; \gamma \; \textrm{is a global trajectory of } Z_k \mbox{ with } \gamma (0) \in \Lambda_k \right\rbrace.$$


%
%

\begin{proposition}
	\label{pro}
	For any $k < \infty$ let $\gamma \in \Omega_k$ be a global trajectory, then for all $t \in \mathbb{R}$, there exists an unique $t^{\ast} \in [t, t+1)$ such that $\gamma (t^{\ast}) \in \lbrace (p_j , 0) \mid j = 1, \cdots, k-1 \rbrace$.
	
	The proof of the previous proposition is found in the reference  \cite{andre1}.
	
\end{proposition}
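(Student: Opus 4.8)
The plan is to exploit the rigid geometry of $Z_k$ on $\Lambda_k$: along $\Lambda_k$ a trajectory is nothing but a unit-speed motion whose first coordinate turns around only over the points of $\Sigma$, while the two-folds $(p_j,0)$ sit exactly one $x$-unit apart inside $[r_0,r_1]$, with the two endpoints a further half-unit beyond $p_1$ and $p_{k-1}$. So the statement should reduce to checking that exactly one unit of time separates two successive passages of $\gamma$ through a two-fold, whatever branch is chosen at each two-fold.

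First I would record the local dynamics on $\Lambda_k$. The arc $\gamma_k^X=\{(x,P_k(x))\}$ is invariant for $X_{+_k}$ and $\gamma_k^Y=\{(x,-P_k(x))\}$ is invariant for $X_{-_k}$ --- immediate, since $\frac{d}{dt}\bigl(y\mp P_k(x)\bigr)=P'_k(x)\dot x\mp P'_k(x)\dot x=0$ along the respective flows --- and on these arcs the first coordinate obeys $\dot x=+1$, respectively $\dot x=-1$. Since the two arcs meet $\Sigma$ only at $(r_0,0),(p_1,0),\dots,(p_{k-1},0),(r_1,0)$, the $x$-coordinate of any $\gamma\in\Omega_k$ moves with unit speed and can reverse direction only over one of these $k+1$ points. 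Using $P'_k(r_0)>0$, $P'_k(r_1)<0$, $P'_k(p_j)=0$, $P''_k(p_j)>0$ I would classify them: $(r_0,0)\in\Sigma^{c^+}$ and $(r_1,0)\in\Sigma^{c^-}$ are crossing points, so by Definition \ref{def2}$(ii)$ a trajectory through $(r_0,0)$ is forced to pass from $\gamma_k^Y$ to $\gamma_k^X$ (so $\dot x$ flips from $-1$ to $+1$) and a trajectory through $(r_1,0)$ is forced to pass from $\gamma_k^X$ to $\gamma_k^Y$; each $(p_j,0)$ is a visible-visible two-fold, hence a regular tangency, and by Definition \ref{def2}$(iii)$ the forward continuation is one of $\phi_{X_{+_k}},\phi_{X_{-_k}}$, i.e.\ the trajectory either stays on its current arc ("transmission") or switches arc, reversing $\dot x$ ("reflection").

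The heart of the proof is the following claim: if $\gamma(t_0)=(p_j,0)$, then $\gamma(t_0+1)$ is again some $(p_i,0)$ and $\gamma(s)\notin\{(p_1,0),\dots,(p_{k-1},0)\}$ for $s\in(t_0,t_0+1)$; the same for $t_0-1$ by running time backwards. This is a finite case check driven by the spacings $p_j-p_{j-1}=1$ (for $2\le j\le k-1$) and $p_1-r_0=r_1-p_{k-1}=\frac12$. After leaving $(p_j,0)$ the first coordinate of $\gamma$ moves monotonically until it meets the next point of $\Sigma$. If that point is another two-fold $(p_i,0)$ --- which is the case unless $\gamma$ leaves $p_1$ heading left or leaves $p_{k-1}$ heading right --- then $x$ has covered exactly one unit and the elapsed time is $1$. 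In the remaining case the next $\Sigma$-point is an endpoint ($r_0$ if $\gamma$ left $p_1$ going left, $r_1$ if it left $p_{k-1}$ going right), reached after time $\frac12$; there the trajectory is forced to reverse and it comes back to the same two-fold after a further $\frac12$, again total time $1$. In every branch no two-fold is visited in the open interval $(t_0,t_0+1)$, and one is visited exactly at $t_0+1$.

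Finally I would assemble the pieces. Every $\gamma\in\Omega_k$ visits some $(p_j,0)$: starting from $\gamma(0)\in\Lambda_k$ one reaches a point of $\Sigma$ in finite time, and that point is either a two-fold or it is $r_0$ or $r_1$, in which case a further time $\frac12$ reaches $(p_1,0)$ or $(p_{k-1},0)$. Because $\Lambda_k$ is compact and invariant and contains no singular tangency, each global trajectory in $\Omega_k$ is defined for all $t\in\mathbb R$; combined with the previous claim this forces $\{\,s\in\mathbb R:\gamma(s)\in\{(p_1,0),\dots,(p_{k-1},0)\}\,\}$ to be a coset $s_0+\mathbb Z$, and then for every $t\in\mathbb R$ exactly one of its elements lies in $[t,t+1)$, which is the asserted $t^\ast$. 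I expect the one delicate point to be making the case analysis of the third step genuinely exhaustive --- treating each choice-branch at the two-folds and checking that the forced reflection at $r_0$ (respectively $r_1$) exactly absorbs the half-unit end cap so the elapsed time is $1$ on the nose --- and, secondarily, justifying that the maximal interval of every $\gamma\in\Omega_k$ is indeed all of $\mathbb R$.
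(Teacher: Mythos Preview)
Your argument is correct. The paper itself does not give a proof of this proposition --- it simply defers to the reference \cite{andre1} --- so there is no in-paper argument to compare against; your unit-speed-in-$x$ analysis, together with the spacings $p_{j}-p_{j-1}=1$ and $p_1-r_0=r_1-p_{k-1}=\tfrac12$ and the forced reflection at the crossing endpoints, is exactly the natural proof and is presumably what the cited reference contains. One small remark: your case split ``unless $\gamma$ leaves $p_1$ heading left or leaves $p_{k-1}$ heading right'' is already exhaustive for $k=2$ as well (there $p_1=p_{k-1}$), so the argument covers all $k\ge 2$ uniformly.
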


The region $\Lambda_k$ can be partitioned into arcs that goes from $p_j$ to the adjacent ones ($p_{j+1}$ and $p_{j-1}$) or to itself. So, consider $k$ to be fixed and let 

\begin{center}
	$I_0 = \left\lbrace (x, P_k (x)), \; x \in [r_0, p_1)\right\rbrace \cup \left\lbrace (x, - P_k (x)), \; x \in [r_0, p_1)\right\rbrace, $
\end{center}the arc from $p_1$ to itself passing through $r_0$. For any $j = 1, \cdots , k - 2$, let
$$I_{2j-1} = \left\lbrace (x, P_k (x)), \; x \in (p_j, p_{j+1})\right\rbrace \textrm{e} \; \; I_{2j} = \left\lbrace (x, -P_k (x)), \; x \in (p_j, p_{j+1})\right\rbrace , $$the arcs from $p_j$ to $p_{j+1}$ and from $p_{j+1}$ to $p_j$, respectively. And, $$I_{2k-3} = \left\lbrace (x, P_k (x)), \; x \in (p_{k-1}, r_1] \right\rbrace \cup \left\lbrace (x, - P_k (x)), \; x \in (p_{k-1}, r_1]\right\rbrace.$$In short, we enumerate these arcs top to bottom, left to right. See Figure $1$.

Consider a set $\mathcal{A}_k$ with $k$ elements (say $\mathcal{A}_k = \{0, 1, \cdots , k - 1 \}$) with the discrete topology. Now, consider $\mathcal{A}_k^{\mathbb{Z}}$ , i.e., all the sequences $x = (x_j)_{j\in \mathbb{ Z}}$ , with $x_j \in \mathcal{A}_k$, for all $j$ and the product
topology of all discrete topologies.

\begin{definition}
	\label{def5}
	Let $x = (x_j)_{j \in \mathbb{Z}}$ and $y = (y_j)_{j \in \mathbb{Z}}$ two elements of $\mathcal{A}_k^{\mathbb{Z}}$ . Define $d : \mathcal{A}_k^{\mathbb{Z}} \times \mathcal{A}_k^{\mathbb{Z}} \rightarrow \mathbb{R}$ by:  $$d(x, y) = \underset {i \in \mathbb{Z}}{\sum} \frac{\mid x_i - y_i \mid}{2^ {|i|} }.$$
\end{definition}

\begin{definition}
	\label{def6}
	Define  $ \sigma : \mathcal{A}_k^{\mathbb{Z}} \rightarrow \mathcal{A}_k^{\mathbb{Z}} $ given by $\sigma ((a_j)) = b_j$, where $b_j = a_{j+1}$. The map  is called two-sided full shift and the discrete flow $(\mathcal{A}_k^{\mathbb{Z}}, \sigma)$ is called symbolic flow or shift
	system.
\end{definition}


\begin{definition} 
	\label{def7}
	Let $s : \Omega_k \rightarrow \mathcal{A}_{2(k-1)}^{\mathbb{Z}}$ be given by $s(\gamma) = (s_j(\gamma))_{j \in \mathbb{Z} }$, where : 
	
	$$ s_j(\gamma) =  \left\{ \begin{array}{cc} \left.\begin{array}{l} 
			n \; \textrm{if}  \; \gamma(j) \; \in \; I_n  \\
			m \; \textrm{if}  \; \gamma(j) \; \in \; \lbrace (p_l, 0) \, | l = 1, \cdots , k-1 \rbrace \; \textrm{and} \; \gamma \left( j + \frac{1}{2}\right) \in  I_m.
			
		\end{array} \right.
	\end{array}
	\right.   $$ 
	
	The sequence $s(\gamma)$ is called the itinerary of $\gamma$.
\end{definition}

According to Definition (\ref{def7}), given $\gamma \in \Omega_k$, there exist infinitely many distinct trajectories with the same itinerary of $\gamma$, simply because the initial conditions belong to the same arc $I_n$. In order to avoid this problem we will consider the following definition:

\begin{definition}
	\label{def8}
	Let $\gamma_1 , \gamma_2 \in \Omega_k .$ We say that $\gamma_1 \sim \gamma_2$ if and only if $s(\gamma_1) = s(\gamma_2) .$ Denote $\overline{\Omega}_k = \Omega_k / \sim .$
\end{definition}  

The relation in Definition (\ref{def8}) is an equivalence relation. In fact, for  $\gamma_1 \in \Omega_k$, we have $\gamma_1 \sim \gamma_1$ because $s(\gamma_1) = s(\gamma_1)$. Also for all $\gamma_1, \gamma_2 \in \Omega_k$, if $\gamma_1 \sim \gamma_2$, we have $s(\gamma_1) = s(\gamma_2)$, and in this way, $\gamma_2 \sim \gamma_1$. And finally for all  $\gamma_1, \gamma_2, \gamma_3 \in \Omega_k$, if $\gamma_1 \sim \gamma_2$, we have $s(\gamma_1) = s(\gamma_2)$ and if $\gamma_2 \sim \gamma_3$, we have $s(\gamma_2) = s(\gamma_3)$, and in this way, $ s(\gamma_1) = s(\gamma_2) = s(\gamma_3)$, and therefore $\gamma_1 \sim \gamma_3 .$


\begin{remark}\label{obs representante}
	Observe that, given $\overline{\gamma} \in \overline{\Omega}_k$, there exists a representative $\gamma^{\ast}$ such that  $\gamma^{\ast} (0) \in \lbrace (p_j, 0), j = 1, 2, \cdots , k-1 \rbrace$ 
	(see \cite{andre1}).
\end{remark}

\begin{definition}
	\label{def9}
	The Hausdorff distance between the sets A and B is given by $$d_H(A, B) =  max \left\lbrace \underset{x \in A}{sup} \, \underset{y \in B}{inf} \, d(x,y), \underset{y \in B}{sup} \, \underset{x \in A}{inf} \, d(x,y)  \right\rbrace = max \left\lbrace \underset{x \in A}{sup} \, d(x,B), \underset{y \in B}{sup} \, d(y,A) \right\rbrace. $$
\end{definition}

\begin{definition}
	\label{def10}
	Now, define $\rho_k : \overline{\Omega}_k \times \overline{\Omega}_k \rightarrow \mathbb{R}$, by $$\rho_k (\overline{\gamma_1}, \overline{\gamma_2}) =  \underset {i \in \mathbb{Z}}{\sum} \frac{d_i(\overline{\gamma_1}, \overline{\gamma_2})}{2^{\mid i \mid}},$$where $d_i(\overline{\gamma_1}, \overline{\gamma_2}) = d_H \left( \gamma_1^{\ast}([i, i+1]),  \gamma_2^{\ast}([i, i+1])\right), $ $d_H$ is the Hausdorff distance and $\gamma_1^{\ast} , \gamma_2^{\ast}$ are those representatives given in Remark (\ref{obs representante}).
\end{definition}

For simplicity of notation, hereafter we will refer only to $\gamma \in \overline{\Omega}_k$, meaning the equivalence class $\overline{\gamma}$ with the representative $\gamma^{\ast} \in \lbrace (p_j, 0) \mid j = 1, 2, \cdots , k-1 \rbrace .$

Let $\overline{T_1} : \overline{\Omega}_k \rightarrow \overline{\Omega}_k$ be the function induced by $T_1$, that is, $\overline{T_1}(\overline{\gamma}) = \overline{T_1(\gamma)} .$ Note that the induced function does not depends on the representative. In fact if $s(\gamma_1) = s(\gamma_2) = (s_j)_{j \in \mathbb{Z}}$, then, for all $j \in \mathbb{Z}$ \\

$\left.\begin{array}{ll}
	\gamma_1 (j), \gamma_2(j) \in I_{s_j} \Rightarrow \gamma_1 (j+1), \gamma_2 (j+1) \in I_{s_{j+1}} \Rightarrow \\ \\ T_1 (\gamma_1)(j), T_2 (\gamma_2)(j) \in I_{s_{j+1}} \Rightarrow s(T_1(\gamma_1)) = s(T_1(\gamma_2)) .  
	
\end{array} \right. $ \\

Now let $\overline{s} : \overline{\Omega}_k \rightarrow \lbrace0, 1, \cdots , 2k-3\rbrace^{\mathbb{Z}}$ be the function induced by $s$, that is, $\overline{s}(\gamma) = s (\gamma)$. Note that the induced function does not depend on the representative, because of the equivalence relation and because it is
one-to-one (see \cite{andre1} ).

\subsection{Carath\'{e}odory Construction, Hausdorff Measure and Hausdorff Dimension}
\label{hausd}
The Carath\'{e}odory construction of outer-measures is a general framework with which one can construct many of the standard geometric outer-measures including the Hausdorff measures.
\begin{definition}
	Let $(X, d)$ be a metric space,  $\mathfrak{F} \subseteq \mathcal{P}(X)$ and $\zeta : \mathfrak{F} \rightarrow [0, \infty) $ (potentially Hausdorff) such that 
	\begin{enumerate}
		\item[$(i)$] For all $\delta > 0$ there exist $\left\lbrace  \mathfrak{A}_i \right\rbrace \subset \mathfrak{F} $ such that $X \subset \cup_i \mathfrak{A}_i$ and $d( \mathfrak{A}_i) \leq \delta .$ 
		\item[$(ii)$] For all $\delta > 0$ there exist $ \mathfrak{A}  \in \mathfrak{F} $ such that $\zeta (U) \leq \delta.$ 
	\end{enumerate}
For $\delta > 0$ we define 

\begin{center}
	$\psi_{\delta} : \mathcal{P}(X) \rightarrow [0, \infty] $
\end{center}

$$\psi_{\delta} (A) = inf \left\lbrace \Sigma_i \zeta (\mathfrak{A}_i) : A \subset \cup_i \mathfrak{A}_i, \, d(\mathfrak{A}_i) < \delta \left\lbrace \mathfrak{A}_i\right\rbrace \subset \mathfrak{F} \right\rbrace  $$
\end{definition}

By a $\delta$-cover in the context of the Carath\'{e}odory Construction we mean a countable collection of sets $\{\mathfrak{A}_i\} \subset \mathfrak{F}$ such that $\zeta(\mathfrak{A}_i) \leq \delta$ and $d(\mathfrak{A}_i) \leq \delta$. This definition is dependent on $\mathfrak{F}$, if this is ambiguous we will refer to such covers as
$(\mathfrak{F}, \delta)$-covers. For brevity we write $\psi_{\delta}(A) = \inf \underset{i}{\sum} \zeta(\mathfrak{A}_i)$ where $\{ \mathfrak{A}_i \}$ is understood to be a$(\mathfrak{F}, \delta)$-cover of $A$. In cases where this notation is ambiguous we will use an appropriately descriptive unambiguous version of the definition above.

\begin{definition}
	Let $(X, d)$ be a metric space,  $\mathfrak{F} \subseteq \mathcal{P}(X)$ and $\zeta_s (\cdot) = d(\cdot)^s$, then
	for each $s \in (0,\infty)$ we construct the $s$-dimensional size $\delta$ approximating measures $\mathcal{H}_{\delta}^s$ and the $s$-dimensional Hausdorff Measure, $\mathcal{H}^s$ via the Carath\'{e}odory Construction.
\end{definition}

One should note immediately that rather than constructing one outer-measure we are actually constructing a family of outer-measures parameterized by $s \in [0, \infty).$ This family has the interesting property, which will be shown in \cite{worth}, that each outer-measure provides useful information about a different family of subsets of $X.$
\begin{definition}
	The Hausdorff Dimension (or Hausdorff-Besicovitch Dimension)
	of a set $A$ is the unique $s \in [0,\infty)$ such that 
	$$\mathcal{H}^t (A) =
	\left\{ \begin{array}{c}
		\infty \, \,for \, all  \,\, 0 \leq t < s \\
		0 \,\,\, for \, all  \,\, t > s.
	\end{array}
	\right.$$
\end{definition}

We denote the Hausdorff dimension of a set $A$ by $dim_{\mathcal{H}} (A).$

\section*{Acknowledgements}
M. A. C. Florentino  is  
was financed in part by the Coordena\c{c}\~{a}o de Aperfei\c{c}oamento de Pessoal de N\'{i}vel Superior – Brasil (CAPES) – Finance Code $001.$ 

T. Carvalho is partially supported by S\~{a}o Paulo Research Foundation (FAPESP) grants 2019/10269-3 and 2021/12395-6 and by CNPq-BRAZIL grant 304809/2017-9. 

J. Cassiano is partially supported by FP7-PEOPLE-2012-IRSES-316338.



\end{document}